\newtheorem{thm}{Theorem}[section]
\newtheorem{lm}{Lemma}[section]
\begin{document}

\title{Local and Global Properties of $p$-Laplace H\'enon Equation}
\date{}
\author{Geyang, Du\\           %%%  1st Author's information   %%%
    \footnotesize School of Mathematical Sciences, Peking University, \\
    \footnotesize Beijing 100871, China,
    E-mail\,$:$ gydu@pku.edu.cn \and
    Shulin, Zhou\\     %%%  2nd Author's info, if exists, or you may delete this part directly  %%%
    \footnotesize LMAM, School of Mathematical Sciences, Peking University,\\
    \footnotesize Beijing 100871, China, E-mail\,$:$ szhou@math.pku.edu.cn}

\maketitle
\begin{abstract}
We first give some apriori estimates of positive radial solutions of $p$-Laplace H\'enon equation. Then we study the local and global properties of those solutions. Finally, we generalize some radial results to the nonradial case.
\end{abstract}
\textbf{Keywords} $p$-Laplace H\'enon equation, singular solutions, removable singularity.

\medskip
\noindent\textbf{MSC(2010)} 35A01,32D20,35J92,34M35.
\section{Introduction and Main Results}
In this paper, we study local and global properties of $p$-Laplace H\'enon equation
\begin{equation}\label{original}
-\Delta_p u=|x|^{\alpha}u^q,
\end{equation}
where $\Delta_pu=\mathrm{div}(|\nabla u|^{p-2}\nabla u)$. Local properties refer to local behavior of solutions near a certain point, like removable singularity and the order of isolated singularity. Global properties refer to properties of solutions in $\mathbb{R}^N$.

When $p=2$, this is the usual H\'enon equation
\begin{equation}\label{usual}
-\Delta u=|x|^{\alpha}u^q.
\end{equation}
Equation \eqref{usual} was proposed by astrophysicist H\'enon in \cite{Henon}. The first mathematical study about this equation was by \cite{Ni}. After that, a lot of results such as existence, nonexistence, and symmetry breaking were studied, see \cite{Kenichi,Serra,Smets}.

When $p=2,\alpha=0$, equation \eqref{original} is the well known Lane-Emden equation
\begin{equation*}
-\Delta u=u^q,
\end{equation*}
which was studied in \cite{Chandrasekhar}. Lions studied its isolated singularity in \cite{Lions}.

Because the $p$-Laplacian operator is lack of linearity when $p\neq2$, equation \eqref{original} is more difficult than equation \eqref{usual}. Nevertheless, in \cite{Serrin1} Serrin generalized the Carleson's result about the harmonic function \cite{Carleson} and obtained the well-known local properties of general quasilinear equations. Based on his work, the local and global properties of $p$-Laplace Lane-Emden equation
\begin{equation*}
-\Delta_p u=u^q,
\end{equation*}
were studied in \cite{Bidaut,Laurent}.

Usually, the right hand side of equation \eqref{original} is called a source term, see \cite{Veron}. When the right hand side is changed to negative, like
\begin{equation*}
-\Delta_p u=-|x|^{\alpha}u^q,
\end{equation*}
it is called an absorption term. Results about local and global properties of elliptic partial differential equations with an absorption term can be found in \cite{Brezis1987,Brezis,Friedman,Loewner,Vazquez1980,Veron1981}.

Before stating our main results, we first give some definitions. Define $\mu$ as the fundamental solution of $-\Delta_pu=\delta_0$ in distributional sense,
\begin{equation*}
\mu(x)=\mu(|x|)=\left\{
\begin{array}{cl}
\dfrac{p-1}{N-p}\left(\omega_{N-1}\right)^{\frac{-1}{p-1}}|x|^{\frac{p-N}{p-1}},\ &if\ 1<p<N,\\
\left(\omega_{N-1}\right)^{\frac{-1}{N-1}}\ln\left(\dfrac1{|x|}\right),\ &if\ p=N.
\end{array}
\right.
\end{equation*}
where $\omega_{N-1}$ is the area of unit sphere $S^{N-1}$ and $\delta_0$ is the Dirac delta function.

The concept of continuous solution was introduced by Serrin \cite{Serrin}. $u$ is called a continuous solution of \eqref{original} in $\Omega$ if $u$ is continuous in $\Omega$ with $\nabla u\in L_{loc}^{p}(\Omega)$ and $u$ satisfies
\[\int_{\Omega}|\nabla u|^{p-2}\nabla u\nabla \phi=\int_{\Omega}|x|^{\alpha}u^q\phi,\ \forall \phi\in C_c^{\infty}(\Omega).\]
All the solutions referred in the following are continuous solutions.
%%%%%%%%%%%%%%%%%%%%%%%%%%%%%%%%%%%%%%%%thm 1

This paper is organized as follows. First, in Section 2 we give some lemmas which will be used in the following sections. Then in Section 3 we study the local properties of \eqref{original} in the radial case.
\begin{thm}\label{local}
Assume $1<p<N$, $\Omega$ is an open domain containing $\{0\}$ in $\mathbb{R}^N$, and let $u$ be a positive radial solution of \eqref{original} in $\Omega'=\Omega\backslash\{0\}$.
\begin{enumerate}
\item In the subcritical case $p-1<q<\frac{(N+\alpha)(p-1)}{N-p}$, either $u$ can be extended to $\Omega$ as a $C^1$ solution of \eqref{original} or there exists some constant $C>0,\tilde C>0$ such that $\displaystyle{\lim_{x\to0}\frac {u(x)}{\mu(x)}=C}$. Futhermore, $u$ satisfies $-\Delta_p u-|x|^{\alpha}u^q=\tilde C\delta_0$ in the distributional sense.
\item In the critical case \label{case2} $q=\frac{(N+\alpha)(p-1)}{N-p}$, either $u$ can be extended to $\Omega$ as a $C^1$ solution of \eqref{original} or  \[\lim_{x\to0}|x|^{\frac{N-p}{p-1}}\left(\ln\frac1{|x|}\right)^{\frac{N-p}{(p+\alpha)(p-1)}}u(x)=
    \left[\left(\frac{N-p}{p+\alpha}\right)\left(\frac{N-p}{p-1}\right)^{p-1}\right]^{\frac{N-p}{(p+\alpha)(p-1)}}.\]
\item In the supercritical case \label{case3}
$\frac{(N+\alpha)(p-1)}{N-p}<q<\frac{(N+\alpha)p}{N-p}-1$,
either $u$ can be extended to $\Omega$ as a $C^1$ solution of \eqref{original} or
 \[\lim_{x\to0}|x|^{\frac{p+\alpha}{q+1-p}}u(x)= \lambda=\left[\left(\frac{p+\alpha}{q+1-p}\right)^{p-1}\left(N-\frac{pq+\alpha(p-1)}{q+1-p}\right)\right]^{\frac1{q+1-p}}.\]
\end{enumerate}
\end{thm}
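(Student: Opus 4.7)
\medskip
\noindent\textbf{Proof plan.} By radial symmetry the equation becomes
\[
\bigl(r^{N-1}(-u')^{p-1}\bigr)'=r^{N-1+\alpha}u^q,\qquad r\in(0,R),
\]
where positivity of the right hand side forces $u'<0$ near $0$. I would introduce the flux $E(r):=r^{N-1}(-u')^{p-1}$; the ODE shows $E$ is nondecreasing, so $\tilde C:=\lim_{r\to 0^+}E(r)\in[0,+\infty]$ exists. Whenever $\tilde C<+\infty$ the central dichotomy is $\tilde C=0$ versus $\tilde C>0$. In the former, the pointwise bound $-u'(r)\le(E(r)/r^{N-1})^{1/(p-1)}$ integrates to keep $u$ bounded near $0$, after which Serrin's removable-singularity theorem produces the $C^1$ extension. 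Thus in each of the three cases it suffices to analyze the singular alternative.

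For the subcritical case, the condition $q<(N+\alpha)(p-1)/(N-p)$ is exactly what guarantees $\int_0^R s^{N-1+\alpha}u^q\,ds<\infty$ given the worst admissible growth $u\lesssim \mu$, so $\tilde C$ is finite. When $\tilde C>0$, one integrates $(-u')^{p-1}\sim\tilde C\,r^{1-N}$ from $r$ to a fixed radius to obtain $u(r)\sim C\mu(r)$ with $C$ explicit in terms of $\tilde C$ and $\omega_{N-1}$. For the distributional identity, I would test the equation against $\phi\in C_c^\infty(\Omega)$ on $\{|x|>\varepsilon\}$, integrate by parts, and let $\varepsilon\to 0$; the above asymptotics make the boundary flux on $\{|x|=\varepsilon\}$ converge to $\tilde C\,\phi(0)$, yielding $-\Delta_p u-|x|^\alpha u^q=\tilde C\delta_0$.

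For the supercritical case, apply the scale-invariant Emden--Fowler substitution $v(t)=r^{(p+\alpha)/(q+1-p)}u(r)$ with $t=-\ln r$; a direct computation (using $\beta(p-1-q)=p+\alpha$ for $\beta=-(p+\alpha)/(q+1-p)$) makes the resulting ODE in $v(t)$ autonomous, with precisely two non-negative equilibria, $0$ and the constant $\lambda$ of the statement. Combining a Lyapunov functional built from the nonlinearity with the a priori bounds collected in Section 2, any positive orbit of this autonomous system converges as $t\to+\infty$ to either $0$ (corresponding to removability after bootstrapping $u$ from this slow growth) or $\lambda$ (the stated limit).

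The critical case is the main obstacle. The substitution $\varphi(s)=r^{(N-p)/(p-1)}u(r)$, $s=-\ln r$, is still autonomous at criticality, but the only non-negative equilibrium in the $(\varphi,\psi)$-plane with $\psi=\varphi'+(N-p)\varphi/(p-1)$ is the origin, so every orbit decays and we must identify the precise algebraic rate. Inserting the ansatz $\varphi\sim Cs^{-\gamma}$ into the transformed equation $-(p-1)\psi^{p-2}\psi'=\varphi^q$ and matching leading powers of $s$ forces $\gamma(q-p+1)=1$, i.e.\ $\gamma=(N-p)/((p+\alpha)(p-1))$, and the coefficient condition $(p-1)((N-p)/(p-1))^{p-1}\gamma=C^{q-p+1}$ gives exactly the constant listed in the theorem. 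To make this rigorous I would build explicit super- and sub-barriers of the form $C_\pm s^{-\gamma}$, sandwich the orbit between them, and rule out every other decay rate by showing it produces a sign inconsistency in the leading-order balance between $\varphi^q$ and $(\psi^{p-1})'$; the delicate bookkeeping of lower-order logarithmic corrections needed to close this comparison is where the bulk of the work lies.
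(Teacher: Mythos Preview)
Your approach is essentially the paper's. The flux $E(r)=r^{N-1}(-u')^{p-1}$ is, up to a constant, $(v'(s))^{p-1}$ in the paper's change of variables $v(s)=u(r)$, $s=r^{(p-N)/(p-1)}$, and the monotonicity of $E$ is exactly the concavity $v''<0$ exploited in Lemma~\ref{apriori}. The Emden--Fowler substitution $w(s)=r^{\delta}u(r)$, $s=-\ln r$, that you use in the critical and supercritical cases is the same one the paper uses, and the paper likewise defers the hard critical-case asymptotics to the analysis in \cite{Laurent} rather than writing out a barrier argument in full.

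Two points deserve correction. First, in the subcritical case $\tilde C=0$ does not by itself force $u$ bounded: integrating $-u'=(E/r^{N-1})^{1/(p-1)}$ with $E(r)\to 0$ only yields $u=o(\mu)$, and you then need Serrin's dichotomy (which you do invoke, together with $|x|^{\alpha}u^{q+1-p}\in L^{N/p+\epsilon}$) to rule out the alternative $u\approx\mu$ and conclude removability. Second, and more substantively, in the critical case the paper does not go straight to barriers. It first proves the refined a~priori estimate of Lemma~\ref{apriori}(ii), namely $w(s)\le Cs^{-(N-p)/((p+\alpha)(p-1))}$, which is precisely what guarantees $w^q\in L^1(t,\infty)$ and hence lets one integrate the transformed ODE to the identity $(w'+\delta w)^{p-1}=\int_t^{\infty}w^q$; from this the dichotomy $w'/w\to 0$ or $w'/w\to-\delta$ follows. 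Your assertion that ``every orbit decays'' presupposes exactly this bound, and without it your proposed comparison with $C_\pm s^{-\gamma}$ has no starting point. The a~priori estimates are thus not merely bookkeeping but a genuine ingredient that your outline omits.
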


\begin{thm}\label{p=N}
Assume $p=N,q>p-1$, $\Omega$ is an open domain containing $\{0\}$ in $\mathbb{R}^N$, and let $u$ be a positive radial solution of \eqref{original} in $\Omega'=\Omega\backslash\{0\}$.
Then either $u$ can be extended to $\Omega$ as a $C^1$ solution of \eqref{original} or there exists some constant $C>0,\tilde C>0$ such that $\displaystyle{\lim_{x\to0}\frac {u(x)}{\mu(x)}=C}$, and $u$ satisfies $-\Delta_p u-|x|^{\alpha}u^q=\tilde C\delta_0$ in the distributional sense.
\end{thm}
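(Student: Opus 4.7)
My plan is to reduce the PDE to a radial ODE and then carry out an exhaustion-style analysis of the radial flux, paralleling the strategy used in the subcritical case (i) of Theorem~\ref{local}. Since $u$ is radial, writing $r=|x|$ the equation becomes
\[
-\bigl(r^{N-1}|u'|^{N-2}u'\bigr)' = r^{N-1+\alpha}u^{q},
\]
and a singular solution must satisfy $u(r)\to\infty$ as $r\to 0^{+}$, hence $u'(r)<0$ on a neighborhood of $0$. Setting $v(r):=r^{N-1}|u'(r)|^{N-1}$, we obtain $v'(r)=r^{N-1+\alpha}u(r)^{q}\ge 0$, so $v$ is nondecreasing and $C_{1}:=\lim_{r\to0^{+}}v(r)\in[0,\infty]$ exists.

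The first dichotomy is between a bounded and an unbounded solution. If $u$ stays bounded in a punctured neighborhood of $0$, then $|x|^{\alpha}u^{q}\in L^{\infty}_{\mathrm{loc}}$ and the Serrin-type removable singularity theorem (quoted in Section~2) upgrades $u$ to a $C^{1}$ solution in the whole $\Omega$. Thus we may assume $u(r)\to\infty$. Here I would invoke the a priori estimates of Section~2 (the natural $p=N$ analogue of the subcritical bound) to control $u$ from above by a constant multiple of $\ln(1/r)$, which, combined with $N-1+\alpha>-1$, makes $\int_{0}^{r_{0}}s^{N-1+\alpha}u(s)^{q}\,ds$ finite. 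Consequently $C_{1}<\infty$.

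Next I would exclude $C_{1}=0$ in the unbounded regime. If $C_{1}=0$ then $v(r)=\int_{0}^{r}s^{N-1+\alpha}u(s)^{q}\,ds$, so
\[
|u'(r)|=r^{-1}\Bigl(\int_{0}^{r}s^{N-1+\alpha}u(s)^{q}\,ds\Bigr)^{1/(N-1)}.
\]
Iterating this bound with the growth information from Section~2 one shows $|u'|$ is integrable near $0$, whence $u$ would extend continuously to $0$ with a finite value, contradicting $u(r)\to\infty$. Therefore $C_{1}\in(0,\infty)$, and as $r\to 0^{+}$
\[
|u'(r)|\sim C_{1}^{1/(N-1)}\,r^{-1},\qquad u(r)\sim C_{1}^{1/(N-1)}\ln(1/r).
\]
Comparing with $\mu(r)=(\omega_{N-1})^{-1/(N-1)}\ln(1/r)$ yields $\lim_{x\to0}u(x)/\mu(x)=(\omega_{N-1}C_{1})^{1/(N-1)}=:C>0$.

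Finally, for the distributional identity, pick $\phi\in C_{c}^{\infty}(\Omega)$ and integrate by parts on $\{|x|>\varepsilon\}$:
\[
\int_{|x|>\varepsilon}|\nabla u|^{p-2}\nabla u\cdot\nabla\phi\,dx
=\int_{|x|>\varepsilon}|x|^{\alpha}u^{q}\phi\,dx
+\int_{|x|=\varepsilon}|u'|^{N-2}u'\,\phi\,dS.
\]
The surface term equals $-\omega_{N-1}\,v(\varepsilon)^{1}\phi(0)+o(1)$ as $\varepsilon\to 0^{+}$ (writing $|u'|^{N-2}u'=-|u'|^{N-1}$ and using $v(\varepsilon)\to C_{1}$ together with continuity of $\phi$), while the volume term on the right converges by the integrability established above. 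Passing to the limit yields the distributional equation with $\tilde C=\omega_{N-1}C_{1}>0$.

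The main obstacle is the removability step, i.e.\ demonstrating that an unbounded radial singular solution necessarily has $C_{1}>0$; this is what forces a careful use of the a priori bound on $u$ from Section~2 rather than a naive integration of $|u'|$, and it is also what makes the logarithmic critical exponent $p=N$ qualitatively different from the subcritical case.
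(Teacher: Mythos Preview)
Your flux-based argument is essentially correct but follows a different route from the paper. The paper introduces the substitution $s=-\ln r$, $v(s)=u(r)$ (the $p=N$ analogue of the Lemma~\ref{apriori} substitution $s=r^{(p-N)/(p-1)}$), obtains $(N-1)|v'|^{N-2}v''+e^{-s(N+\alpha)}v^{q}=0$, and then simply reruns the subcritical argument of Theorem~\ref{local}(i): concavity of $v$ gives $u/\mu\le C$, Lemma~\ref{isolated singularity} yields the dichotomy, and monotonicity of $v'(s)$ produces $\lim u/\mu$. You instead track the radial flux $v(r)=r^{N-1}|u'|^{N-1}$ and read off the asymptotics from $C_{1}=\lim_{r\to0^{+}}v(r)$.

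One place your exposition is tangled: you invoke ``the a priori estimates of Section~2'' (they live in Section~3 as Lemma~\ref{apriori}, and only for $p<N$) in order to show $\int_{0}^{r_{0}}s^{N-1+\alpha}u^{q}\,ds<\infty$ and hence $C_{1}<\infty$. This is unnecessary and slightly circular. Because your $v$ is nondecreasing and nonnegative, $C_{1}=\inf v\le v(r_{0})$ is finite automatically; the integral is then simply $v(r_{0})-C_{1}$; and the logarithmic bound $u(r)\le C\ln(1/r)$ is a \emph{consequence} of $v\le v(r_{0})$ via $|u'(r)|\le v(r_{0})^{1/(N-1)}r^{-1}$. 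With this reordering your exclusion of $C_{1}=0$ and the remainder of the proof go through cleanly. (Minor point: in your displayed integration-by-parts identity the outward normal to $\{|x|>\varepsilon\}$ at $|x|=\varepsilon$ is $-x/|x|$, which flips the sign of the surface term; your final value $\tilde C=\omega_{N-1}C_{1}$ is nevertheless correct.)

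What each approach buys: the paper's substitution is economical because it literally reduces to a case already handled, while your flux analysis is more self-contained---it delivers the a~priori bound, the integrability of $r^{N-1+\alpha}u^{q}$, and the precise asymptotic all from the single monotone quantity $v$, and in the singular branch it avoids invoking Lemma~\ref{isolated singularity} altogether.
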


%%%%%%%%%%%%%%%%%%%%%%%%%%%%%%%%%%%%%%%%%%%%%%%%%%%%%%%%%%%%%%%%%%%%%%%%%%%%%%5
Next, in Section 4 we obtain the following global property by dealing with the exterior problem.
\begin{thm}\label{globalthm}
Assume $1<p<N$ and $p-1<q<\frac{(N+\alpha)p}{N-p}-1$. Then equation \eqref{original} has no positive radial solution in $\mathbb{R}^{N}$.
\end{thm}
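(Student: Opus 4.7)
The plan is to combine ODE monotonicity and decay estimates with a Pohozaev-type identity. Writing $u=u(r)$ for $r=|x|$, equation \eqref{original} becomes $(r^{N-1}|u'|^{p-2}u')'=-r^{N-1+\alpha}u^q$ with $u'(0)=0$. The right-hand side being strictly negative, $r^{N-1}|u'|^{p-2}u'$ is strictly decreasing and starts at zero, so $u'(r)<0$ and $u$ is strictly decreasing on $(0,\infty)$. If $\lim_{r\to\infty}u(r)=L>0$, integrating the ODE would give $-u'(r)\ge c_L r^{(1+\alpha)/(p-1)}$, which is incompatible with boundedness of $u$; hence $u(r)\to 0$.

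I would then derive the universal decay bound $u(r)\le Cr^{-\beta}$ with $\beta=(p+\alpha)/(q+1-p)$. Using $u(s)\ge u(r)$ for $s\le r$ in the integrated ODE,
\begin{equation*}
r^{N-1}(-u'(r))^{p-1}=\int_0^r s^{N-1+\alpha}u(s)^q\,ds\ge \frac{u(r)^q\, r^{N+\alpha}}{N+\alpha}.
\end{equation*}
Multiplying by $-\frac{q-p+1}{p-1}u(r)^{-q/(p-1)}$ converts this into a lower bound for the derivative of $w(r)=u(r)^{-(q-p+1)/(p-1)}$, and integrating from $0$ to $r$ yields the claimed bound. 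Feeding this estimate back into the integral representation and splitting $\int_0^r$ at $s=1$ delivers a companion bound $|u'(r)|\le Cr^{-\gamma}$ for large $r$, where $\gamma=\min(\beta+1,(N-1)/(p-1))$, with a logarithmic correction at the transitional exponent $q=(N+\alpha)(p-1)/(N-p)$.

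Next, I would apply the Pohozaev identity on $B_R$: test \eqref{original} with $x\cdot\nabla u$, integrate by parts twice, and substitute the integrated form $\int_{B_R}|\nabla u|^p=\int_{B_R}|x|^\alpha u^{q+1}+\int_{\partial B_R}u|\nabla u|^{p-2}\partial_\nu u$. After rearrangement one arrives at
\begin{equation*}
\left(\frac{N+\alpha}{q+1}-\frac{N-p}{p}\right)\int_{B_R}|x|^\alpha u^{q+1}\,dx=\mathcal{B}(R),
\end{equation*}
where $\mathcal{B}(R)$ is a linear combination of the boundary quantities $R^{N+\alpha}u(R)^{q+1}$, $R^{N}|u'(R)|^p$ and $R^{N-1}u(R)|u'(R)|^{p-1}$. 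The decay estimates above bound each of these by $R$ to a strictly negative power (up to logarithms) whenever $q<\frac{(N+\alpha)p}{N-p}-1$; in the same range the coefficient on the left is strictly positive. Letting $R\to\infty$ therefore forces $\int_{\mathbb{R}^N}|x|^\alpha u^{q+1}=0$, contradicting $u>0$.

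The main obstacle I anticipate is controlling $|u'|$ sharply: the ODE-based bound for $|u'|$ changes nature at $q=(N+\alpha)(p-1)/(N-p)$ because the integral representing $r^{N-1}|u'|^{p-1}$ switches from growing like a power of $r$ to converging to a constant. A case split, plus a logarithmic adjustment at the borderline, is enough to verify that all three boundary terms in $\mathcal{B}(R)$ vanish uniformly throughout the admissible range $p-1<q<\frac{(N+\alpha)p}{N-p}-1$.
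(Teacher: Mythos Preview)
Your argument is correct and follows the same Pohozaev strategy as the paper, but you obtain the needed decay estimates by a different and somewhat more self-contained route. The paper first invokes Lemma~\ref{exteriorradial} (the exterior problem), which uses the Emden--Fowler change of variable $s=r^{(p-N)/(p-1)}$, $v(s)=u(r)$; that lemma already rules out the range $p-1<q\le \frac{(N+\alpha)(p-1)}{N-p}$ outright (no positive radial solution even on $\{|x|\ge 1\}$), and for $q>\frac{(N+\alpha)(p-1)}{N-p}$ it yields $u\le c_2|x|^{-\frac{p+\alpha}{q+1-p}}$ and $|\nabla u(R)|\le CR^{-\frac{q+1+\alpha}{q+1-p}}$, which are then fed into the Pohozaev identity. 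You instead work directly from the ODE on $[0,\infty)$ with $u'(0)=0$, use monotonicity to get the algebraic decay of $u$, and then reinsert it to bound $|u'|$, carrying the case split (and logarithmic correction at $q=\frac{(N+\alpha)(p-1)}{N-p}$) through the boundary terms. Your bound $|u'|\le Cr^{-\gamma}$ with $\gamma=\min(\beta+1,(N-1)/(p-1))$ is in each regime the weaker of the two natural rates, but it suffices: one checks that $R^{N+\alpha}u^{q+1}$, $R^{N}|u'|^{p}$ and $R^{N-1}u|u'|^{p-1}$ all have strictly negative exponents (up to logs) exactly when $q<\frac{(N+\alpha)p}{N-p}-1$. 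The payoff of your version is a single unified argument across the whole range $p-1<q<\frac{(N+\alpha)p}{N-p}-1$; the payoff of the paper's version is that Lemma~\ref{exteriorradial} is independently interesting, since in the sub- and critical ranges it gives nonexistence on any exterior domain, not just on $\mathbb{R}^{N}$.
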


%%%%%%%%%%%%%%%%%%%%%%%%%%%%%%%%%%%%%%%%%%%%thm5
Finally, in Section 5 we prove some of the radial results can be generalized to the nonradial case, such as the subcritical case in Theorem \ref{local} and Theorem \ref{globalthm}.
\begin{thm}[Subcritical Case]\label{nonradialsub}
Assume $1<p<N$, $p-1<q<\frac{(N+\alpha)(p-1)}{N-p}$, and let $u$ be a positive solution of \eqref{original} in $\Omega'=\Omega\backslash\{0\}$ satisfying $\dfrac{u(x)}{\mu(x)}\le C$ for some constant $C>0$. Then
\begin{enumerate}
\item either $u$ can be extended to $\Omega$ as a $C^1$ solution of \eqref{original}, or
\item there exists some constant $a>0$ such that $\displaystyle{\lim_{x\to0}}\frac {u(x)}{\mu(x)}=a$.
\end{enumerate}
\end{thm}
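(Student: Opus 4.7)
The plan is to reduce the nonradial problem to a dichotomy of the Brezis--Lions / Kichenassamy--Veron type, exploiting the a priori bound $u\le C\mu$ to obtain enough integrability, and then to recover the existence of the limit via blow-up combined with Harnack's inequality.

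First I would check that $f:=|x|^{\alpha}u^{q}$ lies in $L^{1}_{\mathrm{loc}}(\Omega)$. Since $u\le C\mu$ and $\mu(x)\sim |x|^{(p-N)/(p-1)}$, we have $|x|^{\alpha}u^{q}\lesssim |x|^{\alpha-q(N-p)/(p-1)}$, and the exponent exceeds $-N$ exactly when $q<(N+\alpha)(p-1)/(N-p)$, i.e.\ in the subcritical regime. Combined with Tolksdorf/DiBenedetto gradient estimates applied to $u\le C\mu$, this gives $|\nabla u|^{p-2}\nabla u\in L^{1}_{\mathrm{loc}}$ near $0$. Testing \eqref{original} against $\phi(1-\eta_{\varepsilon})$ with $\eta_{\varepsilon}$ a smooth cutoff around the origin and letting $\varepsilon\to 0$, both sides converge and the boundary terms produce a Dirac mass, yielding
\[-\Delta_p u - |x|^{\alpha}u^{q}=\tilde C\delta_{0}\qquad\text{in }\mathcal{D}'(\Omega)\]
for some $\tilde C\ge 0$ (existence of this limiting flux follows from the monotonicity of the flux $\Phi(r)=\int_{|x|=r}|\nabla u|^{p-2}\partial_{\nu}u\,dS$, which by the equation equals $\Phi(r_{0})-\int_{r<|x|<r_{0}}|x|^{\alpha}u^{q}$).

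If $\tilde C=0$, then $-\Delta_p u=|x|^{\alpha}u^{q}$ holds distributionally throughout $\Omega$, and Serrin's removable singularity theorem for quasilinear equations (applicable since $u\le C\mu$ and the right-hand side is in $L^{1}_{\mathrm{loc}}$) forces $u\in L^{\infty}_{\mathrm{loc}}$, after which standard $C^{1,\beta}$ regularity provides the $C^{1}$ extension. If $\tilde C>0$, we must exhibit $a>0$ with $u/\mu\to a$. Introduce the rescalings $v_{\lambda}(x):=u(\lambda x)/\mu(\lambda)$. A direct computation shows
\[-\Delta_p v_{\lambda}=\lambda^{p+\alpha}\mu(\lambda)^{q-p+1}\,|x|^{\alpha}v_{\lambda}^{q},\]
and in the subcritical range the prefactor $c_{\lambda}:=\lambda^{p+\alpha}\mu(\lambda)^{q-p+1}$ tends to $0$. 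The family $\{v_{\lambda}\}$ is uniformly bounded by $C\mu$ on compact subsets of $\mathbb{R}^{N}\setminus\{0\}$, so by interior $C^{1,\beta}$ estimates, along subsequences $v_{\lambda_{k}}\to v_{0}$ locally uniformly in $\mathbb{R}^{N}\setminus\{0\}$, with $v_{0}$ a nonnegative $p$-harmonic function satisfying $0\le v_{0}\le C\mu$. The classification of positive $p$-harmonic functions with isolated singularity at $0$ (dominated by $\mu$) then forces $v_{0}=a\mu$ for some $a\ge 0$.

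The main obstacle is the uniqueness of this blow-up limit: the nonlinearity of $\Delta_p$ prevents a Brezis--Lions-type linear decomposition into a fundamental solution part and a regular part. To pin down $a$ independently of the subsequence I would use the $p$-flux $\Phi(r)$ introduced above, together with its scaling law: since $u(x)=\mu(\lambda)v_{\lambda}(x/\lambda)$, one obtains $\Phi(\lambda)=\mu(\lambda)^{p-1}\lambda^{1-N}\cdot(\text{analogous flux of }v_{\lambda}\text{ at radius }1)$, and passing $\lambda_{k}\to 0$ through a convergent subsequence the right-hand side tends to a positive multiple of $a^{p-1}$ (the flux of $a\mu$ at radius $1$). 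Combined with $\Phi(r)\to -\tilde C$ as $r\to 0$ and the explicit constant in $\mu$, this identifies $a^{p-1}$ as a universal multiple of $\tilde C$, hence independent of the subsequence. Harnack's inequality on dyadic annuli $\{r/2<|x|<2r\}$ then upgrades convergence along sequences to the full limit $u(x)/\mu(x)\to a$ as $x\to 0$, completing the dichotomy.
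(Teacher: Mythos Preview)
Your argument is essentially correct but takes a different route from the paper's. The paper never derives the distributional identity $-\Delta_p u - |x|^{\alpha}u^{q}=\tilde C\delta_0$ nor uses a flux to pin down the limit. Instead, after invoking Serrin's dichotomy (Lemma~\ref{isolated singularity}) to reduce to the case $c_1\mu\le u\le c_2\mu$, the paper sets $a:=\limsup_{x\to 0}u/\mu$, realizes it along a sequence $x_n=r_n\xi_n$, and blows up only along those radii. The limit $v$ is $p$-harmonic on a fixed compact annulus, satisfies $v\le a\,\mu/\mu(1)$ with equality at the accumulation point of the $\xi_n$, and the Strong Comparison Principle (Lemma~\ref{strong comparison principle}) forces $v\equiv a\,\mu/\mu(1)$. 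This yields $u\ge (a-\varepsilon)\mu$ on the spheres $\{|x|=r_n\}$, and since $-\Delta_p u\ge 0$, the ordinary Comparison Principle on each annulus $r_{n+1}\le |x|\le r_n$ propagates the lower bound inward, giving $\liminf u/\mu\ge a-\varepsilon$.

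Compared with this, your approach needs two extra ingredients: the Kichenassamy--V\'eron type classification of nonnegative $p$-harmonic functions on $\mathbb{R}^N\setminus\{0\}$ dominated by $\mu$ (to conclude $v_0=a\mu$ globally), and $C^1$ convergence of the blow-ups to pass the flux to the limit. Both are available, and your flux identity even gives the bonus of expressing $a$ explicitly in terms of $\tilde C$. The paper's argument, by contrast, is more self-contained (only a compact annulus is needed, and uniqueness of the limit comes from a touching-point argument rather than an invariant quantity). Two small points in your write-up: the step ``$\tilde C=0\Rightarrow u\in L^\infty_{\mathrm{loc}}$'' is not immediate from $L^1$ integrability alone---you should either verify the $L^{N/(p-\varepsilon)}$ condition on $|x|^{\alpha}u^{q+1-p}$ needed for Serrin's theorem (which the subcritical range provides), or note that $\tilde C=0$ forces $a=0$ in your blow-up, hence $u=o(\mu)$, and then Serrin applies; and the final appeal to Harnack is unnecessary once you know every subsequential blow-up limit equals the same $a\mu$.
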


\begin{thm}\label{nonradialrn}
Assume $1<p<N$, $p-1<q<\frac{(N+\alpha)p}{N-p}-1$ and let $u(x)$ be a nonnegative solution of \eqref{original} in $\mathbb{R}^N$ satisfying $|x|^{\frac{p+\alpha}{q+1-p}}u(x)\le C$ for some constant $C>0$, then $u\equiv0$.
\end{thm}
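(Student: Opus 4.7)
The plan is to combine an energy identity (obtained by testing \eqref{original} against $u\xi_R^m$) with a Pohozaev-type identity (obtained via a divergence formula involving $x\cdot\nabla u$), letting the cutoff $\xi_R$ exhaust $\mathbb{R}^N$ as $R\to\infty$. Set the scaling exponent $\beta:=(p+\alpha)/(q+1-p)$; the hypothesis is exactly $u(x)\le C|x|^{-\beta}$. The two identities will have coefficient ratios $1$ and $\frac{N-p}{p}\big/\frac{N+\alpha}{q+1}$, and the strict inequality $q<(N+\alpha)p/(N-p)-1$ is precisely what makes them incompatible unless $\int|x|^\alpha u^{q+1}=0$.

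A preliminary step is a matching gradient bound $|\nabla u|(x)\le C(1+|x|)^{-\beta-1}$. For $R\ge1$ the rescaled solution $v_R(y):=R^\beta u(Ry)$ also solves \eqref{original} on $\mathbb{R}^N$ and inherits $|y|^\beta v_R(y)\le C$, so $v_R$ is uniformly bounded on $B_{3/2}\setminus B_{3/4}$; the interior $C^{1,\gamma}$ estimate for $p$-Laplace equations with bounded right-hand side gives a uniform bound on $|\nabla v_R|$ there, which scales back to the claim. Together with the hypothesis, this bound forces $\int_{\mathbb{R}^N}|\nabla u|^p$ and $\int_{\mathbb{R}^N}|x|^\alpha u^{q+1}$ to be finite in the range $q<(N+\alpha)p/(N-p)-1$.

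With $\xi_R(x)=\eta(|x|/R)$ a standard cutoff, testing \eqref{original} with $u\xi_R^m$ produces
\begin{equation*}
\int|\nabla u|^p\xi_R^m=\int|x|^\alpha u^{q+1}\xi_R^m - m\int u|\nabla u|^{p-2}\nabla u\cdot\nabla\xi_R\cdot\xi_R^{m-1},
\end{equation*}
and the last boundary integral is $O(R^{N-\beta p-p})$, which tends to zero exactly when $q<(N+\alpha)p/(N-p)-1$; this yields the energy identity $\int|\nabla u|^p=\int|x|^\alpha u^{q+1}$. For the Pohozaev identity I would use the classical divergence trick that avoids second derivatives: the vector field $F:=|\nabla u|^{p-2}(x\cdot\nabla u)\nabla u-\tfrac{1}{p}|\nabla u|^p x$ satisfies distributionally
\begin{equation*}
\mathrm{div}\,F=-|x|^\alpha u^q(x\cdot\nabla u)-\tfrac{N-p}{p}|\nabla u|^p,
\end{equation*}
the Hessian contributions cancelling. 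Pairing against $\xi_R^m$, and integrating the $x\cdot\nabla u$ term a second time by parts (against the field $|x|^\alpha x\,\xi_R^m$) to convert it into $-\frac{N+\alpha}{q+1}\int|x|^\alpha u^{q+1}\xi_R^m$ plus boundary, and checking that the two remaining boundary contributions are of orders $R^{N-\beta p-p}$ and $R^{N+\alpha-(q+1)\beta}$ (both vanishing in our range), yields $\frac{N-p}{p}\int|\nabla u|^p=\frac{N+\alpha}{q+1}\int|x|^\alpha u^{q+1}$.

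Combining the two identities gives $\bigl(\tfrac{N-p}{p}-\tfrac{N+\alpha}{q+1}\bigr)\int|x|^\alpha u^{q+1}=0$, and the assumption $q<(N+\alpha)p/(N-p)-1$ makes the coefficient strictly negative, forcing $\int|x|^\alpha u^{q+1}=0$ and hence $u\equiv 0$. The main obstacle is to justify the distributional divergence formula for $F$ at the regularity $C^{1,\gamma}$ provided by $p$-Laplace theory, since the pointwise computation uses $D^2u$; this is handled by solving the regularized equation $-\mathrm{div}((|\nabla u_\delta|^2+\delta)^{(p-2)/2}\nabla u_\delta)=|x|^\alpha u^q$ on $B_R$ with boundary data $u$, for which $u_\delta\in C^2$, deriving the identity classically, and passing to the limit $\delta\to 0$ via $C^{1,\gamma}$ convergence. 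When $\alpha<0$ one also excises $B_\epsilon(0)$ and verifies that the extra term on $\partial B_\epsilon$ vanishes as $\epsilon\to 0$, using that $u$ and $\nabla u$ are bounded near the origin by continuity and interior regularity.
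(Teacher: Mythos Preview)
Your proposal is correct and follows essentially the same route as the paper: first a scaling argument on annuli to upgrade the pointwise bound on $u$ to the gradient decay $|\nabla u(x)|\le C|x|^{-(q+1+\alpha)/(q+1-p)}$, and then the combination of the energy identity (testing with $u$) and the Pohozaev identity (testing with $x\cdot\nabla u$), whose boundary/cutoff terms vanish precisely when $q<\frac{(N+\alpha)p}{N-p}-1$, forcing $\int|x|^\alpha u^{q+1}=0$. The only substantive difference is packaging: the paper works on balls $B_R$ with genuine boundary integrals while you use smooth cutoffs, and you are in fact more careful than the paper about justifying the Pohozaev computation at $C^{1,\gamma}$ regularity via the regularized operator, a point the paper glosses over by simply invoking the radial argument of its Theorem~\ref{globalthm}.
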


As far as we know, there are several papers studying local and global properties of $p$-Laplace Lane-Emden equation, but there are few about $p$-Laplace H\'enon equation. Some results such as Theorem \ref{nonradialrn} are new even for $p$-Laplace Lane-Emden equation.

\section{Preliminaries}
In this section, we list some lemmas which will be used later. For quasilinear partial differential equations
\begin{equation}\label{quasi}
\mathrm{div} A(x,u,\nabla u)=B(x,u,\nabla u),
\end{equation}
where $A$ is a given vector function and $B$ is a measurable function satisfying
\begin{align*}
|A(x,u,\xi)|\le a|\xi|^{p-1}+b|u|^{p-1}+e,\\
|B(x,u,\xi)|\le c|\xi|^{p-1}+d|u|^{p-1}+f,\\
\xi\cdot A(x,u,\xi)\ge|\xi|^{p}-d|u|^{p}-g,
\end{align*}
we assume that $1<p\le N,a\in(0,\infty)$ and coefficients $b$ through $g$ are measurable functions in the respective Lebesgue classes
\[b,e\in L^{\frac{N}{p-1-\epsilon}};c\in L^{\frac{N}{1-\epsilon}}; d,f,g\in L^{\frac{N}{p-\epsilon}},\ \epsilon>0.\]
The following lemma comes from Theorem 1 in \cite{Serrin}.
\begin{lm}[Isolated Singularity]\label{isolated singularity}
Assume $\Omega$ is an open domain containing $\{0\}$, and let $u$ be a continuous solution of \eqref{quasi} in the $\Omega'=\Omega\backslash\{0\}$. Suppose that $u\ge L$ for some constant $L$. Then either $u$ has removable singularity at $0$, or else
\begin{equation*}
u\approx\left\{
\begin{array}{cl}
|x|^{\frac{p-N}{p-1}},\ &if\ p<N,\\
\ln\left(\frac1{|x|}\right),\ &if\ p=N,
\end{array}
\right.
\end{equation*}
in the neighborhood of the origin, where $``\approx"$ means ``has the same order with".
\end{lm}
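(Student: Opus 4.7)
The plan is to follow the classical Serrin/Trudinger strategy for isolated singularities of quasilinear elliptic equations. Let $M(r) := \sup_{|x|=r} u(x)$ and $m(r) := \inf_{|x|=r} u(x)$. By replacing $u$ with $u-L$ (the shifted function still satisfies \eqref{quasi} with coefficients of the same structural type, since constants can be absorbed into $B$ and $e$, $f$, $g$), I may assume $u \ge 0$. The first step is to invoke the weak Harnack inequality on annular neighborhoods $\{r/2 < |x| < 2r\}$, which is available under the stated Lebesgue integrability of $b,c,d,e,f,g$. This yields a uniform two-sided comparison
\[ M(r) \le C_0\, m(r) \qquad \text{for all sufficiently small } r>0, \]
with $C_0$ independent of $r$. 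From here the argument splits according to whether $M(r)$ is bounded or unbounded as $r \to 0^+$.

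If $\limsup_{r \to 0^+} M(r) < \infty$, I would show the singularity is removable. Since a single point has zero $p$-capacity for $p \le N$, I can test the weak formulation against $\phi(1-\eta_\varepsilon)$, where $\phi \in C_c^\infty(\Omega)$ and $\eta_\varepsilon$ is a standard cutoff supported in $B_{2\varepsilon}$, equal to $1$ on $B_\varepsilon$, with $\|\nabla \eta_\varepsilon\|_{L^p} \to 0$. Using that $u$ is bounded near $0$ and $\nabla u \in L^p_{\mathrm{loc}}(\Omega')$, I can pass to the limit $\varepsilon \to 0$ and obtain the weak formulation on all of $\Omega$. The $C^{1,\alpha}$ regularity theory of DiBenedetto and Tolksdorf for $p$-Laplace type equations then upgrades $u$ to a $C^1$ solution on $\Omega$.

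If $M(r) \to \infty$, I must prove $u \approx \mu$ near the origin. For the upper bound, fix a small $r_0$ and choose $K$ so that $K\mu(r_0) \ge M(r_0)$. The barrier $K\mu$ is $p$-harmonic on $\Omega'$ and blows up at $0$, so the idea is to invoke the comparison principle on $B_{r_0} \setminus \{0\}$ to deduce $u \le K\mu$. The lower bound follows by the symmetric argument, comparing $u$ from below with a small multiple of $\mu$ on $\partial B_{r_0}$; the Harnack inequality plus $M(r) \to \infty$ gives $m(r) \to \infty$, which provides enough pointwise information to set up the sub-barrier.

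The main technical obstacle is that the full equation $\mathrm{div}\,A = B$ does not satisfy a clean comparison principle, only its principal part does. To close the comparison step in the blow-up case one must treat $B(x,u,\nabla u)$ as a perturbation of the principal operator whose contribution on $B_{r_0}$ is controlled by the Lebesgue norms of $c,d,f$; because the exponents $N/(1-\varepsilon)$ and $N/(p-\varepsilon)$ are strictly supercritical, these norms tend to zero as $r_0 \to 0$. An iterative absorption scheme (effectively a perturbation argument around the $p$-harmonic comparison) then yields the required $u \le C\mu$. Quantifying the smallness and closing the iteration despite the nonlinearity is the delicate heart of Serrin's original proof, and is where the bulk of the work must be carried out.
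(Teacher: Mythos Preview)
The paper does not prove this lemma at all: it is quoted as Theorem~1 of Serrin \cite{Serrin} and used as a black box. So there is no in-paper argument to compare against; your sketch is being measured against Serrin's original proof.

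Your outline has the right large-scale architecture (Harnack on dyadic annuli, the bounded/unbounded dichotomy for $M(r)$, removability via the vanishing $p$-capacity of a point). The problematic step is the upper bound $u\le C\mu$ in the blow-up case. As written, your barrier argument is circular: to apply a comparison principle on the punctured ball $B_{r_0}\setminus\{0\}$ you must control $u$ on the \emph{entire} boundary, including the inner boundary shrinking to $0$; saying that ``$K\mu$ blows up at $0$'' does not help, because a priori $u$ could blow up faster than any multiple of $\mu$, and that is exactly what you are trying to exclude. Serrin does not obtain the upper bound by a direct barrier comparison on the punctured ball. Instead, the bound $u\le C\mu$ is extracted from integral estimates: one tests the equation against suitable powers of $u$ times cutoffs, uses the structural growth of $A$ and $B$ together with the supercritical integrability of the coefficients to run a Moser-type iteration on annuli, and combines this with the Harnack inequality $M(r)\le C\,m(r)$ to get the pointwise rate. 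Only after that does comparison with $\mu$ become available (and is then used mainly to pin down the lower bound and the existence of a limit for $u/\mu$). Your closing remark about an ``iterative absorption scheme'' hints in the right direction but does not supply the mechanism; as stated, the upper-bound step is a genuine gap.
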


We call $u$ is a $p$-harmonic function if $\Delta_pu=0$. The following Comparison Principle comes from Theorem 2.15 in \cite{Lindqvist}.
\begin{lm}[Comparison Principle]\label{comparison principle}
Suppose that $u$ and $v$ are p-harmonic functions in a bounded domain $\Omega$. If at each $\zeta\in\partial \Omega$
\[\limsup_{x\to \zeta} u(x)\le \liminf_{x\to \zeta}v(x),\]
excluding the situation $\infty\le\infty$ and $-\infty\le-\infty$, then $u\le v$ in $\Omega$.
\end{lm}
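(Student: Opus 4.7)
The plan is to argue by contradiction: assume the set $\{u>v\}\subset\Omega$ is nonempty and derive a contradiction by testing the $p$-harmonic equations against a suitable cutoff of $u-v$. The key device is the \emph{$\varepsilon$-truncation}: for each $\varepsilon>0$, set
\[D_\varepsilon=\{x\in\Omega:u(x)-v(x)>\varepsilon\}.\]
Because $u,v$ are continuous inside $\Omega$ (by the $C^{1,\alpha}$ regularity of $p$-harmonic functions) and because the boundary hypothesis $\limsup_{x\to\zeta}u(x)\le\liminf_{x\to\zeta}v(x)$ (with the exclusion of indeterminate $\pm\infty$ cases) rules out $u-v>\varepsilon$ near any $\zeta\in\partial\Omega$, the closure of $D_\varepsilon$ is a compact subset of $\Omega$. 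If I can show each $D_\varepsilon$ is empty, then $u-v\le\varepsilon$ in $\Omega$ and letting $\varepsilon\downarrow 0$ gives the conclusion.

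On $D_\varepsilon$ I would use the test function
\[\varphi=(u-v-\varepsilon)^{+},\]
extended by zero outside $D_\varepsilon$. Its support is compactly contained in $\Omega$ and $\varphi\in W^{1,p}_0(\Omega)$ since $u,v\in W^{1,p}_{\mathrm{loc}}(\Omega)$. By the weak formulation of $p$-harmonicity applied to both $u$ and $v$ with this test function, and subtracting,
\[\int_{D_\varepsilon}\bigl(|\nabla u|^{p-2}\nabla u-|\nabla v|^{p-2}\nabla v\bigr)\cdot\nabla(u-v)\,dx=0.\]
Now I invoke the standard strict monotonicity inequality for the $p$-Laplace vector field: for $p>1$ and $\xi\neq\eta$,
\[\bigl(|\xi|^{p-2}\xi-|\eta|^{p-2}\eta\bigr)\cdot(\xi-\eta)>0.\]
The integrand is therefore nonnegative and vanishes only where $\nabla u=\nabla v$, so $\nabla(u-v)=0$ a.e.\ on $D_\varepsilon$. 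By continuity, $u-v$ is constant on each connected component of $D_\varepsilon$; but $u-v=\varepsilon$ on $\partial D_\varepsilon\cap\Omega$ by continuity, forcing $u-v\equiv\varepsilon$ on $D_\varepsilon$, which contradicts the strict inequality defining $D_\varepsilon$. Hence $D_\varepsilon=\emptyset$ for every $\varepsilon>0$.

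The main obstacle, and the place requiring the most care, is the compact containment $\overline{D_\varepsilon}\subset\subset\Omega$: this is precisely where the somewhat delicate boundary hypothesis (stated with $\limsup$/$\liminf$ rather than equality and with the $\infty\le\infty$ exclusion) enters, and it is what legitimizes choosing $(u-v-\varepsilon)^{+}$ as a compactly supported admissible test function. The monotonicity inequality for the $p$-Laplacian is a textbook fact, and once the test function is justified the argument is essentially one line; similarly, the fact that $u,v\in W^{1,p}_{\mathrm{loc}}$ and are continuous (so that $D_\varepsilon$ is open and $\varphi$ lies in $W^{1,p}_0$) follows from standard regularity theory for $p$-harmonic functions that underpins the entire framework of Serrin adopted earlier in the paper.
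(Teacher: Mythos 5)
The paper does not prove this lemma; it simply imports it as Theorem 2.15 of Lindqvist's \emph{Notes on the stationary $p$-Laplace equation}, so there is no in-paper argument to compare against. Your proof is correct and is essentially the standard one (and the one Lindqvist uses): reduce to showing $\overline{D_\varepsilon}\subset\subset\Omega$ by a compactness argument over $\partial\Omega$ using the $\limsup/\liminf$ hypothesis (with the excluded $\pm\infty\le\pm\infty$ cases), test both weak formulations with $(u-v-\varepsilon)^+$, invoke strict monotonicity of $\xi\mapsto|\xi|^{p-2}\xi$ to force $\nabla(u-v)=0$ a.e.\ on $D_\varepsilon$, and conclude by continuity that $u-v\equiv\varepsilon$ on each component of $D_\varepsilon$, contradicting its definition. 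The only point worth spelling out a bit more fully is the boundary-neighborhood step: for each $\zeta\in\partial\Omega$ and each $\varepsilon>0$, the hypothesis yields a ball $B_{\delta_\zeta}(\zeta)$ on which $u-v<\varepsilon$ (one checks the finitely many sign cases of $\limsup u$ and $\liminf v$; the excluded $\infty\le\infty$ and $-\infty\le-\infty$ are precisely those in which this separation would fail), and compactness of $\partial\Omega$ then gives a uniform collar; this is exactly where the hypothesis is used, as you indicate.
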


The following Strong Comparison Principle comes from Proposition 1.5.2 in \cite{Veron}.
\begin{lm}[Strong Comparison Principle]\label{strong comparison principle}
Let $\Omega\subset \mathbb{R}^N$ be a domain, $p>1$ and $c\in L^{\infty}_{loc}(\Omega)$. Assume $u$ and $v$ belong to $C^1(\Omega)$, satisfy
\[-\Delta_p u+cu\le0\ and\ -\Delta_p v+cv\ge0\ in\ \mathcal{D}'(\Omega)\]
and $\nabla v$ never vanishes in $\Omega$. If $u\le v$ in $\Omega$ and there exists $x_0\in \Omega$ such that $u(x_0)=v(x_0)$, then $u\equiv v$ in $\Omega$.
\end{lm}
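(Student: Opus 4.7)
The plan is to convert the nonlinear pair of inequalities into a single \emph{linear} uniformly elliptic inequality for the difference $w := v - u$, and then appeal to the classical strong maximum principle together with a connectedness argument. Since $u \le v$ on $\Omega$ with equality at the interior point $x_0$, the $C^1$ function $w$ attains its minimum at $x_0$, so $\nabla u(x_0) = \nabla v(x_0)$; by hypothesis this common value is a nonzero vector $\eta$. By continuity, on a sufficiently small ball $B \subset \Omega$ centered at $x_0$, both $\nabla u$ and $\nabla v$ remain within $|\eta|/2$ of $\eta$, and therefore every convex combination $\xi_t(x) := t\nabla v(x) + (1-t)\nabla u(x)$ satisfies $|\xi_t(x)| \ge |\eta|/2$ uniformly in $t \in [0,1]$ and $x \in B$.

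On this ball I would linearize by writing
\[
\Delta_p v - \Delta_p u \;=\; \int_0^1 \frac{d}{dt}\, \mathrm{div}\!\left(|\xi_t|^{p-2}\xi_t\right) dt \;=\; \mathrm{div}\!\left(M(x)\, \nabla w\right),
\]
where
\[
M(x) \;=\; \int_0^1 \left(|\xi_t|^{p-2}\, I \,+\, (p-2)\,|\xi_t|^{p-4}\,\xi_t\otimes\xi_t\right) dt.
\]
Because $|\xi_t|$ is bounded above and below on $B$, the continuous symmetric matrix field $M$ has eigenvalues uniformly bounded and uniformly strictly positive on $B$. Subtracting the two distributional inequalities and using this identity produces
\[
-\mathrm{div}(M\nabla w) + c\,w \;\ge\; 0 \quad \text{in } \mathcal{D}'(B),
\]
a linear uniformly elliptic divergence-form inequality with $L^\infty$ coefficients, whose nonnegative solution $w$ vanishes at $x_0$.

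At this stage the classical strong maximum principle for linear elliptic operators in divergence form (for instance via Trudinger's weak Harnack inequality) forces $w \equiv 0$ on $B$. To propagate this equality across $\Omega$, define $Z := \{x \in \Omega : u(x) = v(x)\}$; it is closed in $\Omega$ by continuity, and the above local argument applied at an arbitrary $y \in Z$—always available because $\nabla v$ never vanishes on $\Omega$—shows it is also open. Connectedness of the domain $\Omega$ then yields $Z = \Omega$, so $u \equiv v$.

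The main obstacle I anticipate is the rigorous execution of the linearization step in the distributional sense: I have to justify commuting $d/dt$ with $\mathrm{div}$ and rewriting the difference as a genuine divergence-form expression in $\nabla w$. I would handle this by testing against an arbitrary $\phi \in C_c^\infty(B)$, using the weak form of each $p$-Laplace inequality to push the divergence onto $\phi$, applying the fundamental theorem of calculus in $t$ to the integrand $|\xi_t|^{p-2}\xi_t \cdot \nabla \phi$, and invoking Fubini to swap the $t$-integral with the spatial integration; this produces the bilinear form $\int_B (M\nabla w)\cdot \nabla \phi$ and yields the desired linear inequality with $L^\infty$ coefficients, putting us in the regime where the classical strong maximum principle applies.
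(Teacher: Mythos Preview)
The paper does not supply its own proof of this lemma; it simply quotes the result as Proposition~1.5.2 of V\'eron's monograph \cite{Veron} and uses it as a black box. So there is no ``paper's proof'' to compare against.

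That said, your argument is the standard one and is essentially how the result is established in the literature (including, up to presentation, in the cited reference). The key idea---that nonvanishing of $\nabla v$ forces $\nabla u(x_0)=\nabla v(x_0)\ne 0$ at any touching point, so that on a small ball the segment $\xi_t$ stays away from the origin and the $p$-Laplacian linearizes into a uniformly elliptic divergence-form operator---is exactly right, and your handling of the distributional subtraction via the weak formulation and Fubini is the correct way to make it rigorous. One small point worth making explicit when you invoke the weak Harnack/strong maximum principle: since $c$ may change sign, you should absorb it by noting that $w\ge 0$ gives $-\mathrm{div}(M\nabla w)+\|c\|_{L^\infty(B)}\,w\ge (\|c\|_{L^\infty(B)}-c)\,w\ge 0$, which puts you in the nonnegative-coefficient regime where the classical result applies directly. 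With that adjustment the proof is complete and correct.
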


%The following lemma comes from Theorem 1 in \cite{Tolksdorf}.
%\begin{lm}[H\"older Continuity]\label{h\"older continuity}
%If $A,B$ satisfying
%\begin{align*}
%A(x,u,0)=0\\
%\sum_{i,j=1}^{n}\dfrac{\partial A_i}{\partial p_j}(x,u,p)\cdot\xi_i\xi_j\ge \gamma\cdot(\kappa+|p|)^{\alpha-1}\cdot|\xi|^2\\
%\sum_{i,j=1}^{n}\left|\dfrac{\partial A_i}{\partial p_j}(x,u,p)\right|\le\Gamma\cdot(\kappa+|p|)^{\alpha-2}\\
%\sum_{i,j=1}^{n}\left|\dfrac{\partial A_i}{\partial x_j}(x,u,p)\right|+\left|\dfrac{\partial A_i}{\partial u}(x,u,p)\right|\le \Gamma \cdot(\kappa+|p|)^{\alpha-2}\cdot |p|\\
%|B(x,u,p)|\le\Gamma \cdot(1+|p|)^{\alpha}
%\end{align*}
%for some $\kappa\in[0,1]$, some positive $\gamma, \Gamma$, all $x\in \Omega,u \in \mathbb{R}, p\in\mathbb{R}^N\backslash\{0\},\xi\in\mathbb{R}^N$, then there is an $\beta>0$ and constant $c$ such that
%$|\nabla u(x)-\nabla u(x')|\le c|x-x'|^\beta$ for all $x,x'\in B_R(x_0)$, where $R,x_0$ is chosen such that $B_R(x_0)\subset \Omega$.
%\end{lm}

\section{Local Properties in the Radial Case}
We  first give some apriori estimates which will be used in the proof of Theorem \ref{local}.
\begin{lm}[Apriori Estimate]\label{apriori}
Assume $1<p<N$, $\Omega$ is an open domain containing $\{0\}$, and let $u$ be a positive radial solution of \eqref{original} in $\Omega'=\Omega\backslash\{0\}$.
\begin{enumerate}
\item If $p-1<q$, then there exists some constant $C>0$ such that $\dfrac{u(x)}{\mu(x)}\le C$ near 0.
\item If $q=\frac{(N+\alpha)(p-1)}{N-p}$, then  there exists some constant $C>0$ such that\\ $\dfrac{u(x)}{\mu(x)}\cdot\left(\ln\frac1{|x|}\right)^{\frac{N-p}{(p-1)(p+\alpha)}}\le C$ near 0.
\item If $q>\frac{(N+\alpha)(p-1)}{N-p}$, then  there exists some constant $C>0$ such that $|x|^{\frac{p+\alpha}{q+1-p}}u(x)\le C$ near 0.
\end{enumerate}
\end{lm}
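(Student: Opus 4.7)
The proof rests on analyzing the radial ODE $(r^{N-1}|u'|^{p-2}u')'=-r^{N-1+\alpha}u^q$. Setting $\phi(r)=r^{N-1}|u'|^{p-2}u'$, this says $\phi'\le 0$, so $\phi$ is nonincreasing. First I would show that $u'\le 0$ on a punctured neighborhood of $0$: otherwise $\phi(r_*)>0$ for some $r_*$, and monotonicity gives $\phi\ge \phi(r_*)>0$ on $(0,r_*]$, yielding $|u'|\ge c\, r^{-(N-1)/(p-1)}$; since $(N-1)/(p-1)>1$ when $p<N$, the integral of $|u'|$ over $(0,r_*)$ diverges, forcing $u$ to be unbounded below and contradicting $u>0$. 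With $u'\le 0$, integrating the equation produces the master identity
\[
 r^{N-1}|u'(r)|^{p-1}=C_0+H(r),\qquad H(r):=\int_0^r s^{N-1+\alpha}u(s)^q\,ds,
\]
where $C_0:=\lim_{r\to 0^+}r^{N-1}|u'|^{p-1}\in[0,\infty)$ exists by monotonicity; both $C_0$ and $H(r_0)$ are finite because their sum equals the finite value $r_0^{N-1}|u'(r_0)|^{p-1}$.

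Part (i) then follows immediately: since $C_0+H(r)\le C_0+H(r_0)$ on $(0,r_0]$, one has $|u'(r)|\le c\,r^{-(N-1)/(p-1)}$, and integrating from $r$ to $r_0$ gives $u(r)\le Cr^{-(N-p)/(p-1)}$, i.e. $u/\mu\le C$ near the origin.

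For parts (ii) and (iii), the next step is to show $C_0=0$. If $C_0>0$ then $|u'|\ge c\,r^{-(N-1)/(p-1)}$ near $0$, giving $u\ge c\,r^{-(N-p)/(p-1)}$; hence $s^{N-1+\alpha}u^q\ge c\,s^{N-1+\alpha-q\sigma_0}\ge c\,s^{-1}$ in the critical case (and more singular in the supercritical), forcing $H(r_0)=\infty$, a contradiction. With $C_0=0$, the master identity reads $r^{N-1}|u'(r)|^{p-1}=H(r)$, binding the smallness of $u'$ near $0$ to the smallness of $H$. For (ii), I would introduce $w(r)=r^{(N-p)/(p-1)}u(r)$ and $t=\ln(1/r)$, which, together with the critical relation $N-1+\alpha-q\sigma_0=-1$, turns the identity into an autonomous first-order system for $w$ and the renormalized $H$; the self-similar ansatz $w\sim c\,t^{-\beta}$ with $\beta=1/(q-p+1)=(N-p)/((p-1)(p+\alpha))$ is consistent at leading order, and a bootstrap along this system closes the bound $w\le C(\ln(1/r))^{-\beta}$, which is exactly the asserted estimate.

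Part (iii) is the main obstacle. After the rescaling $v(t)=r^{\sigma_1}u(r)$ with $\sigma_1=(p+\alpha)/(q+1-p)$ and $t=\ln(1/r)$ (the scale-invariant combination for equation \eqref{original}), the radial equation becomes autonomous in $(v,v')$, with the constant $\lambda$ appearing in the theorem as its unique positive equilibrium. The task is to show that $v$ stays bounded as $t\to\infty$. The naive approach of inserting the bound $u\le Cr^{-\sigma_0}$ from part (i) gives $v\le Cr^{\sigma_1-\sigma_0}$, which blows up because $\sigma_1<\sigma_0$ in the supercritical range, so a straight bootstrap fails. My plan is to combine $C_0=0$ with the monotonicity of the flux and a Lyapunov/phase-plane analysis of the autonomous system, trapping $(v,v')$ in a compact forward-invariant region around $\lambda$ and thereby deriving the uniform bound $|x|^{\sigma_1}u\le C$.
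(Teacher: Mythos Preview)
Your argument for part (i) is correct and is a genuinely different route from the paper's. The paper performs the Emden--Fowler change $s=r^{\beta}$, $\beta=(p-N)/(p-1)$, $v(s)=u(r)$, which turns the equation into $(p-1)|v'|^{p-2}v''+s^{\frac{1-\beta}{\beta}p+\frac{\alpha}{\beta}}|\beta|^{-p}v^q=0$; the crucial gain is that $v''<0$, so $v$ is concave on $[1,\infty)$. Concavity plus positivity force $v'\ge 0$ and, by L'H\^opital, $v(s)/s\to \lim v'(s)$, which gives (i). Your flux identity $r^{N-1}|u'|^{p-1}=C_0+H(r)$ achieves the same conclusion without the change of variables; both are clean, and your observation $C_0=0$ in the critical/supercritical range is correct.

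The gap is in parts (ii) and (iii), which you only sketch. For (ii) you assert that ``a bootstrap along this system closes the bound'' without carrying it out. For (iii) you propose trapping $(v,v')$ in a compact forward-invariant region around the equilibrium $\lambda$, but this cannot work as stated: the lemma is claimed for \emph{every} $q>\frac{(N+\alpha)(p-1)}{N-p}$, while $(\lambda,0)$ is asymptotically stable only for $q<\frac{(N+\alpha)p}{N-p}-1$ (this stability range is exactly what the paper uses later in Theorem~1.1(iii)). For larger $q$ there is no trapping region around $\lambda$, so your plan does not cover the full supercritical range.

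The paper avoids all phase-plane analysis for the a~priori estimate. From concavity and the mean value theorem it gets $v(s)\ge sv'(s)+o(s)$, substitutes this into the transformed ODE, and obtains for $\psi=(v')^{p-1}$ the Bernoulli-type inequality
\[
\psi'+c\,s^{\gamma}\psi^{q/(p-1)}\le 0,\qquad \gamma=\tfrac{1-\beta}{\beta}p+\tfrac{\alpha}{\beta}+q,
\]
with $\gamma=-1$ at the critical exponent and $\gamma>-1$ in the supercritical range. One integration of this inequality gives the sharp power (resp.\ logarithmic) decay of $v'$, and a second integration gives the bound on $v$, for all $q\ge\frac{(N+\alpha)(p-1)}{N-p}$ uniformly. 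If you want to stay in your $r$-variable framework, the analogue is to insert $u(r)\ge (-u'(r))\cdot\frac{p-1}{N-p}\,r$ (equivalent to the concavity inequality under the change of variables) into your master identity and derive the same separable inequality for $|u'|^{p-1}$; that would close (ii) and (iii) without any dynamical-systems argument.
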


\begin{proof}

The idea of proof comes from \cite{Laurent}. Without loss of generality, we assume $\overline{B_1(0)}\subset\Omega$. Let
    \[\beta=\frac{p-N}{p-1}, s=r^{\beta}, v(s)=u(r),\]
then we obtain
\begin{equation}\label{apriorieqn}
(p-1)|v'|^{p-2}v''+s^{\frac{1-\beta}{\beta}p+\frac{\alpha}{\beta}}\frac{v^q}{|\beta|^p}=0,\ s\in[1,\infty)
\end{equation}
As a result, $v''<0$, which means $v'$ is decreasing and bounded in $[1,\infty)$.
\begin{enumerate}
\item When $v(s)$ is bounded, we can automatically get the conclusion. So we only need to consider the unbounded case, which means $v(s)\to \infty$ as $s\to \infty$ because of the concavity of $v$. It is easy to check $v'\ge0$. According to L'Hospital's rule, $\displaystyle{\lim_{s\to\infty}\frac{v(s)}{s}=\lim_{s\to\infty}v'(s)}$. So $\frac{v(s)}{s}$ is bounded when s is large, which means $\frac{u(x)}{\mu(x)}$ is bounded near 0. This proves the first part.\\
\item According to the Mean Value Theorem,
\[\frac{v(s)-v(1)}{s-1}=v'(\theta)\ge v'(s), 1<\theta<s.\]
 So $v(s)\ge sv'(s)+o(s)$ when $s$ is large. As a result, we induce from \eqref{apriorieqn} that there exists some $c>0$ such that when $s$ is large
\[ \left((v')^{p-1}\right)'+cs^{\frac{1-\beta}{\beta}p+\frac{\alpha}{\beta}+q}(v')^q\le0.\]
   Let $\gamma=\frac{1-\beta}{\beta}p+\frac{\alpha}{\beta}+q,\ \psi(s)=(v'(s))^{p-1}$. We study the following two cases in the situation when s is large.

 When $q=\frac{(N+\alpha)(p-1)}{N-p}$, $\gamma=-1$, $\psi(s)$ satisfies
\[\psi'+cs^{-1}\psi^{\frac{q}{p-1}}\le0.\]
After integration, we get $\psi(s)\le c(\ln s)^{-\frac{p-1}{q+1-p}}$, which means
\[v'(s)\le c(\ln s)^{-\frac{1}{q+1-p}}=c(\ln s)^{-\frac{N-p}{(p+\alpha)(p-1)}}.\]
As a consequence,
$\frac{v(s)}{s}(\ln s)^{\frac{N-p}{(p+\alpha)(p-1)}}$ is bounded when s is large. This means
$\frac{u(x)}{\mu(x)}\left(\ln \frac1{|x|}\right)^{\frac{N-p}{(p+\alpha)(p-1)}}$ is bounded near 0.

\item When $q>\frac{(N+\alpha)(p-1)}{N-p}$, $\gamma>-1$, $\psi(s)$ satisfies
\[\psi'+cs^{\gamma}\psi^{\frac{q}{p-1}}\le0.\]
After integration, we get $\psi(s)\le cs^{-\frac{\gamma+1}{q+1-p}(p-1)}$, which means
\[v'(s)\le cs^{-\frac{\gamma+1}{q+1-p}}=cs^{-\frac{p+\alpha}{\beta(q+1-p)}-1}.\]
As a consequence, $v(s)s^{\frac{p+\alpha}{\beta(q+1-p)}}$ is bounded when s is large. This means
$u(x)|x|^{\frac{p+\alpha}{q+1-p}}$ is bounded near 0.
\end{enumerate}
\end{proof}

Now we are ready to prove Theorem \ref{local}.
\begin{proof}
\begin{enumerate}
\item\textbf{the Subcritical Case}\\
$|x|^{\alpha}u^q=|x|^{\alpha}u^{q+1-p}u^{p-1}$. From Lemma \ref{apriori}, $\frac{u}{\mu}\le C$ near 0. In addition, when $p-1<q<\frac{(N+\alpha)(p-1)}{N-p}$, we have $q+1-p<\frac{(p+\alpha)(p-1)}{N-p}$. So \[|x|^{\alpha}u^{q+1-p}\in L^{\frac Np+\epsilon}(\Omega)\]
 for some $\epsilon>0$.
According to Lemma \ref{isolated singularity}, either $u$ can be extended to $\Omega$ as a $C^1$ solution of \eqref{original} or there exists constants $c_1,c_2>0$ such that $c_1\mu(x)\le u(x)\le c_2\mu(x)$ near 0. Using the notation in Lemma \ref{apriori}, we have $\frac{v(s)}{s}\ge c_1$ which means $v'(s)\ge c_1$ when s is large. From the proof of Lemma \ref{apriori}, we know $v'(s)$ is decreasing in $[1,\infty)$. So there exists a constant $C$ such that $\displaystyle{\lim_{s\to \infty}\frac{v(s)}{s}=\lim_{s\to \infty} v'(s)=C}$, which means \[\lim_{x\to 0}\frac{u(x)}{\mu(x)}=C,\lim_{x\to 0}\frac{u_r(r)}{\mu_r(r)}=\tilde C,\]
where $\tilde C=(C\cdot\frac{N-p}{p-1})^{p-1}\cdot\omega_{N-1}$.

For $\forall \varphi\in C^\infty_c(B_1(0))$, if we multiply both sides of \eqref{original} by $\varphi$ and integrate by parts over $B_1(0)\backslash B_\epsilon(0)$, then we get
\begin{equation}\label{epsilon}
\int_{\partial B_\epsilon(0)}|\nabla u|^{p-2}\nabla u\cdot \vec n \varphi+\int_{B_1(0)\backslash B_\epsilon(0)}|\nabla u|^{p-2}\nabla u\cdot \nabla \varphi=\int_{B_1(0)\backslash B_\epsilon(0)}|x|^\alpha u^q\varphi,
\end{equation}
where $\vec n=\frac{|x|}{x}$. It is known that the fundamental solution $\mu(x)$ satisfies
\[-\int_{\partial B_\epsilon(0)}|\nabla \mu|^{p-2}\nabla \mu\cdot \vec n \varphi+\int_{B_\epsilon(0)}|\nabla \mu|^{p-2}\nabla \mu\cdot \nabla \varphi=\varphi(0).\]
Sending $\epsilon \to 0$ in \eqref{epsilon}, we have
\[-\tilde C\varphi(0)+\int_{B_1(0)}|\nabla u|^{p-2}\nabla u\cdot \nabla \varphi=\int_{B_1(0)}|x|^\alpha u^q\varphi,\]
which implies
\[-\Delta_p u-|x|^{\alpha}u^q=\tilde C\delta_0\]
in the distributional sense.\\
\item\textbf{the Critical Case}\\
Let
\[w(s)=r^{\delta}u(r),s=-\ln r,\delta=\frac{N-p}{p-1}=\frac{p+\alpha}{q+1-p}.\]
Then \eqref{original} can be transformed into
\[[|w'+\delta w|^{p-2}(w'+\delta w)]'+w^q=0.\]
As a result,
\[\left.|w'+\delta w|^{p-2}(w'+\delta w)\right|_t^{t_n}=-\int_t^{t_n}w^q.\]
From Lemma \ref{apriori}, $w(s)\le C s^{\frac{p-N}{(p+\alpha)(p-1)}}$, so for $\forall t>0, w^q\in L^1(t,\infty)$. As a consequence,
\[|w'(t)+\delta w(t)|^{p-2}(w'(t)+\delta w(t))=\int_t^{\infty}w^q.\]
This means $w'(t)+\delta w(t)>0, t\in (0,\infty)$,
\[(w'(t)+\delta w(t))^{p-1}=\int_t^{\infty}w^q.\]
Using the same procedure as in \cite{Laurent} in the critical case, we claim that
\begin{enumerate}
\item $w(t)$ is decreasing in $(1,\infty)$,
\item $\dfrac{w'(t)}{w(t)}\to0$, or $\dfrac{w'(t)}{w(t)}\to -\delta$ as $t\to \infty$.
\end{enumerate}
If $\frac{w'(t)}{w(t)}\to -\delta$ as $t\to \infty$, we can choose $\epsilon_0$ small such that $w(t)\le ce^{(-\delta+\epsilon_0)t}$ for some $c>0$ when $t$ is large, which means $u(r)\le c\dfrac{1}{r^\epsilon_0}$ as $r\to 0$. As in the subcritical case, we can prove $|x|^{\alpha}u^{q+1-p}\in L^{\frac Np+\epsilon}$ for some $\epsilon>0$. So by Lemma \ref{isolated singularity}, $u$ is regular or have the same order with $\mu(x)$. The later is impossible.\\
If $\frac{w'(t)}{w(t)}\to 0$ as $t\to \infty$, we have when $t$ is large,
\[(\delta w(t))^{p-1}\left[1+o\left(\frac{w'(t)}{w(t)}\right)\right]=\int_t^{\infty}w^q.\]
So
\begin{align*}
w(s)\to\left[\frac{p-1}{q+1-p}\left(\frac1{\delta^{p-1}}s-c\right)^{-1}\right]^{\frac{N-p}{(p+\alpha)(p-1)}},\\ s^{\frac{N-p}{(p+\alpha)(p-1)}}w(s)\to \left[\left(\frac{N-p}{p+\alpha}\right)\left(\frac{N-p}{p-1}\right)^{p-1}\right]^{\frac{N-p}{(p+\alpha)(p-1)}}
\end{align*}
as $s\to \infty$.\\

%\textbf{Proof of Claim}
%\begin{proof}
%\begin{enumerate}
%\item Argue by contradiction. Assume $w(s)$ is not decreasing in $(s_0,s_1)$, $w(s_0)$ is the local minimum, $w(s_1)$ is the local maximum, $w'(s)>0$ in $(s_0,s_1)$. As a consequence,
%    \[\left.(w'+\delta w)^{p-1}\right|_{s_0}^{s_1}=-\int_{s_0}^{s_1}w^q<0\]
%    This means $w(s_1)<w(s_0)$, contradiction to $w'(s)>0$.
%\item Let $w=\sigma,y=w'$, then $y[(y+\delta\sigma)^{p-1}]_{\sigma}+\sigma^q=0, \sigma\in(0,w(0)]$
%Denote $z=\frac{y}{\sigma}$, then
%$z_\sigma=-\dfrac{\sigma^{q-p}}{(p-1)z(z+\delta)^{p-2}}-\dfrac{z+\sigma}{\sigma}$.
%Next, we prove $z_\sigma$
%\end{enumerate}
%\end{proof}
\item\textbf{the Supercritical Case}\\
As in the proof of the critical case, we set $\delta=\dfrac{p+\alpha}{q+1-p}, w(s)=r^\delta u(r)$ where $s=-\ln r$. Then equation \eqref{original} becomes
\begin{equation}\label{supercase}
|w'+\delta w|^{p-2}(a_1 w+a_2 w'+a_3 w'')+w^q=0
\end{equation}
where
\[a_1=\delta [(p-1)(\delta+1)-(N-1)]=\delta c_1, a_2=(p-1)\delta+c_1, a_3=p-1.\]
It is the same equation as obtained in \cite{Laurent} except that $\delta$ is different. Regarding \eqref{supercase} as an autonomous system with variables $(w(s),w_s(s))$, we can prove $(\lambda,0)$ is asymptotically stable when $\frac{(N+\alpha)(p-1)}{N-p}<q<\frac{(N+\alpha)p}{N-p}-1$. By the phase plane analysis, as in \cite{Laurent} we assert either $(w(s),w_s(s))\to (\lambda,0)$ which means $\lim_{x\to0}|x|^{\frac{p}{q+1-p}}u(x)=\lambda$, or $w(s)\le ce^{-\delta s}$ which means $u(x)$ is regular. This completes the proof.
\end{enumerate}
\end{proof}

Next we prove Theorem \ref{p=N}.
\begin{proof}
Let $s=-\ln r, v(s)=u(r)$, then
\[(N-1)|v'|^{N-2}v''+e^{-s(N+\alpha)}v^q=0\]
Using the same procedure as in the proof of the subcritical case when $p<N$, we can obtain the conclusion.
\end{proof}

\section{Global Properties in the Radial Case}
\begin{lm}[Exterior Problem]\label{exteriorradial}
Assume $1<p<N$, and $G=\{x\in \mathbb{R}^N:|x|\ge1\}$.
\begin{enumerate}
\item If $p-1<q\le\frac{(N+\alpha)(p-1)}{N-p}$, then \eqref{original} has no positive radial solution in $G$.\\
\item If $\frac{(N+\alpha)(p-1)}{N-p}<q$, and $u$ is a positive radial solution of \eqref{original} in $G$, then there exists some constants $c_1,c_2>0$ such that
    \[c_1\mu(x)\le u(x)\le c_2|x|^{-\frac{p+\alpha}{q+1-p}}\]
     when $|x|$ is large.
\end{enumerate}
\end{lm}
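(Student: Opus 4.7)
The plan is to reuse the substitution from Lemma~\ref{apriori} and run its integration argument in the opposite direction, toward $s\to 0^{+}$ (which corresponds to $r\to\infty$). Writing the radial ODE
\[(r^{N-1}|u'|^{p-2}u')'=-r^{N-1+\alpha}u^{q},\]
one sees that the left side is strictly decreasing, and a standard bootstrap shows that any positive solution on $[1,\infty)$ must have $u'<0$ for $r$ large and $u(r)\to 0$ at infinity (otherwise the integrated equation drives $u$ to $-\infty$). Applying the change of variables $\beta=(p-N)/(p-1)$, $s=r^{\beta}$, $v(s)=u(r)$, which maps $r\in[1,\infty)$ onto $s\in(0,1]$, recasts \eqref{original} as
\[(p-1)|v'|^{p-2}v''+\frac{s^{\gamma}}{|\beta|^{p}}v^{q}=0,\qquad \gamma=\frac{1-\beta}{\beta}p+\frac{\alpha}{\beta}.\]
Thus $v''<0$, so $v$ is concave on $(0,1]$; combined with $v(0^{+})=0$ and $v(1)=u(1)>0$, this gives $v'>0$ near $s=0$ and $s\mapsto v(s)/s$ nonincreasing on $(0,1]$.

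For the lower bound in part~(ii), $v(s)/s\ge v(1)=u(1)$ yields $u(r)\ge u(1)r^{\beta}\ge c_{1}\mu(r)$ directly. For the nonexistence in part~(i), I would set $\psi=(v')^{p-1}$ and use $v\ge sv'$ (concavity plus $v(0)=0$) in the ODE to derive
\[-\psi'\;\ge\;\frac{s^{\gamma+q}}{|\beta|^{p}}\,\psi^{q/(p-1)}.\]
With $\kappa=(q-p+1)/(p-1)>0$ and $\phi=\psi^{-\kappa}$, this becomes $\phi'\ge(\kappa/|\beta|^{p})s^{\gamma+q}$. A short algebraic check shows that $\gamma+q\le -1$ is equivalent to $q\le \frac{(N+\alpha)(p-1)}{N-p}$, so in the sub- and critical cases integration from $s$ to $1$ gives
\[\phi(1)-\phi(s)\;\ge\;\frac{\kappa}{|\beta|^{p}}\int_{s}^{1}t^{\gamma+q}\,dt\longrightarrow+\infty\quad(s\to 0^{+}),\]
forcing $\phi(s)\to-\infty$, which contradicts $\phi=\psi^{-\kappa}>0$.

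For the upper bound in part~(ii), the supercritical regime gives $\gamma+q>-1$, so integrating the same inequality from $0^{+}$ to $s$ (using $\phi(0^{+})\ge 0$) yields $\phi(s)\ge C s^{\gamma+q+1}$, i.e., $v'(s)\le C s^{-(\gamma+q+1)/(q+1-p)}$. Since the exponent is strictly less than $1$ (equivalently $\gamma<-p$, which is implicit via $p+\alpha>0$), a single further integration produces $v(s)\le C s^{a}$ with
\[a=-\frac{\gamma+p}{q+1-p}=\frac{(p+\alpha)(p-1)}{(N-p)(q+1-p)}.\]
Converting back gives $u(r)\le C r^{\beta a}=C r^{-(p+\alpha)/(q+1-p)}$, as claimed. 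The main technical hurdle is the algebraic identification of $\gamma+q=-1$ with the critical exponent $q_{c}$ and the verification that the apriori integration works uniformly whether $v'(0^{+})$ is finite or $+\infty$; the strict inequality $a<1$ in the supercritical case is what makes the two regimes merge into a single bound.
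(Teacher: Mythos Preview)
Your approach is essentially identical to the paper's: the same change of variables $s=r^{\beta}$, $v(s)=u(r)$, the same concavity/monotonicity consequences, and the same integration of the differential inequality for $\psi=(v')^{p-1}$ to produce either a contradiction (subcritical/critical) or the decay bound (supercritical); the paper phrases the contradiction in (i) as ``$v'(s)\to 0$ versus $v'(0^{+})>0$'' rather than ``$\phi\to-\infty$'', but this is the same computation. One small slip: when you write ``the exponent is strictly less than $1$ (equivalently $\gamma<-p$)'', the integrability condition you actually need and use is that the exponent $-(\gamma+q+1)/(q+1-p)$ is strictly greater than $-1$, which is what is equivalent to $\gamma<-p$ (i.e., $p+\alpha>0$).
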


\begin{proof}
\begin{enumerate}
\item We argue by contradiction, assuming that equation \eqref{original} has a positive radial solution $u$ in $G$. Using the same transformation as in the proof of Lemma \ref{apriori},
\[v(s)=u(r),s=r^\beta,\beta=\frac{p-N}{p-1},\]
we obtain
\[(|v'|^{p-2}v')'+\frac1{|\beta|^p}s^{\frac1\beta(p+\alpha)-p}v^q=0,\ s\in(0,1].\]
Because $v''(s)<0$, there exists a constant $c$ such that $v(s)\to c$ as $s\to 0$. As in \cite{Laurent}, we can prove $c=0$ and when $s$ is small
\begin{equation*}
v'(s)\le\left\{\begin{array}{cl}
cs^{-\frac1\beta\frac{p+\alpha}{q+1-p}-1},&q<\frac{(N+\alpha)(p-1)}{N-p},\\
c\left(\ln\frac1s\right)^{-\frac1{q+1-p}},&q=\frac{(N+\alpha)(p-1)}{N-p}.
\end{array}\right.
\end{equation*}
This means $v'(s)\to 0$ as $s\to 0$. It is a contradiction to $v'(0)>0$.
\item Because $v(s)$ is concave, $\frac{v(s)}{s}\ge v'(s)$. In addition, $v'(s)>c_1>0$ when $s$ is small, so $\frac{v(s)}{s}\ge c_1$ which proves $u(x)\ge c_1\mu(x)$.

    When $q>\frac{(N+\alpha)(p-1)}{N-p}$, and $s$ is small, there exists constant $c_2>0$ such that $v'(s)\le c_2s^{-\frac1\beta\frac{p+\alpha}{q+1-p}-1}$. So
    \[\frac{v(s)}{s}\le 2c_2s^{-\frac1\beta\frac{p+\alpha}{q+1-p}-1},\]
    which means \[u(x)\le 2c_2|x|^{-\frac{p+\alpha}{q+1-p}}.\]
    This completes the proof.
\end{enumerate}
\end{proof}

\textbf{Proof of Theorem \ref{globalthm}}
\begin{proof}
We argue by contradiction, assuming that $u$ is a positive radial solution of \eqref{original} in $\mathbb{R}^N$. By Lemma \ref{exteriorradial} we only need to prove the case when $\frac{(N+\alpha)(p-1)}{N-p}<q<\frac{N+\alpha}{N-p}-1$. Multiply both sides of \eqref{original} by $u$ and integrate by parts in $B_R=\{x\in \mathbb{R}^N:|x|\le R\}$,
\begin{align*}
\int_{\partial B_R}|\nabla u|^{p-2}\nabla u\cdot \vec n uds+\int_{B_R}|\nabla u|^pdx=\int_{B_R}|x|^\alpha u^{q+1}dx.
\end{align*}
Multiply both sides of \eqref{original} by $\nabla u\cdot x$ and integrate by parts in $B_R$,
\begin{align}\label{boundary}
&\left(\frac Np-1-\frac{N+\alpha}{q+1}\right)\int_{B_R}|x|^\alpha u^{q+1}dx+\left(\frac Np-1\right)\int_{\partial B_R}|\nabla u|^{p-2}\nabla u\cdot \vec n uds\notag\\
+&\int_{\partial B_R}|\nabla u|^{p-2}\nabla u\cdot \vec n \nabla u\cdot xds-\frac1p\int_{\partial B_R}|\nabla u|^p (x\cdot\vec n)ds\\
+&\frac{1}{q+1}\int_{\partial B_R}|x|^\alpha u^{q+1}(x\cdot\vec n)ds=0\notag
\end{align}
According to Lemma \ref{exteriorradial}, $v'\le2c_2s^{-\frac1\beta\frac{p+\alpha}{q+1-p}-1}$, so $|\nabla u(R)|\le CR^{-\frac{q+1+\alpha}{q+1-p}}$ when $R$ is large.
Because $u$ is positive in $\mathbb{R}^N$, the first part of \eqref{boundary} is negative when $q<\frac{(N+\alpha)p}{N-p}-1$. The boundary parts has the same order with $R^{N-p-\frac{(p+\alpha)p}{q+1-p}}$, so they tend to $0$ as $R\to \infty$. This is a contradiction, which implies \eqref{original} has no positive radial solution in $\mathbb{R}^N$ when $p-1<q<\frac{(N+\alpha)p}{N-p}-1$.
\end{proof}

\section{Nonradial Case}
\subsection{Proof of Theorem \ref{nonradialsub}}
\begin{proof}
Because $\dfrac{u(x)}{\mu(x)}\le C$, by Lemma \ref{isolated singularity}, either $u$ can be extended to $\Omega$ as a $C^1$ solution of \eqref{original} or there exists constants $c_1,c_2>0$ such that
\[c_1\mu(x)\le u(x)\le c_2\mu(x).\]
Assume $\displaystyle{\limsup_{x\to0}\frac{u(x)}{\mu(x)}=a}$, then there exists a sequence $\{x_n\}$ satisfying \[\lim_{n\to \infty}x_n=0,\ \lim_{n\to \infty}\frac{u(x_n)}{\mu(x_n)}=a.\]
Denote $x_n=r_n\xi_n$, where $r_n=|x_n|$. Define $u_{r_n}(\xi)=\frac{u(r_n\xi)}{\mu(r_n)},\ \xi\in\left(0,\frac1{r_n}\right)$. It can be easily checked that $u_{r_n}$ satisfies
\begin{equation}\label{u_r}
div_\xi(|\nabla_\xi u_{r_n}|^{p-2}\nabla_\xi u_{r_n})+r_n^{p+\alpha}\mu(r_n)^{q+1-p}|\xi|^{\alpha}u_{r_n}^q=0
\end{equation}
As $q<\frac{(N+\alpha)(p-1)}{N-p}$, and $q+1-p<\frac{(p+\alpha)(p-1)}{N-p}$, so \[r_n^{p+\alpha}\mu(r_n)^{q+1-p}=\mu(1)r_n^{(1-\theta)(p+\alpha)}\]
 for some $0<\theta<1$. In addition, according to definition, $u_{r_n}(\xi)\le C\frac{\mu(\xi)}{\mu(1)}$. So $u_{r_n}$ is bounded in $\left[\frac12,\frac32\right]$, and
 \[r_n^{p+\alpha}\mu(r_n)^{q+1-p}u_{r_n}^q\to 0\]
 as $n\to \infty$. By \cite{Lieberman,Tolksdorf},  $u_{r_n}$ is H\"older continuous in $\left[\frac12,\frac32\right]$. As a result, by Arzela-Ascoli theorem, there exists a subsequence still denoted by $u_{r_n}(\xi)$ such that $u_{r_n}(\xi)\to v(\xi)$ in $\left[\frac12,\frac32\right]$ for some $p$-harmonic function $v(\xi)$. Clearly, \[v(\xi)\le a\frac{\mu(\xi)}{\mu(1)},\ v(\xi_n)=a\frac{\mu(\xi_n)}{\mu(1)},\]
 so by Lemma \ref{strong comparison principle}, $v(\xi)=a\frac{\mu(\xi)}{\mu(1)}$. For any $\epsilon>0$ we can choose $n$ large such that
 \[u_{r_n}(\xi)\ge (a-\epsilon) \frac{\mu(\xi)}{\mu(1)}\]
 when $\epsilon$ is small, that is
 \[u(r_n\xi)\ge(a-\epsilon)\mu(r_n\xi).\] In particular, $u(x)\ge(a-\epsilon)\mu(x)$ on the boundary of annuli $r_{n+1}\le|x|\le r_n$, by Lemma \ref{comparison principle}, $u(x)\ge(a-\epsilon)\mu(x)$ in $r_{n+1}\le|x|\le r_n$. Letting $n\to \infty$, we have $u(x)\ge(a-\epsilon)\mu(x)$ in $B_{r_{n_0}}\backslash\{0\}$ for some big $n_0$. As a consequence, $\displaystyle\liminf_{x\to 0}\frac{u(x)}{\mu(x)}\ge a-\epsilon $. Letting $\epsilon \to 0$, we complete the proof.
\end{proof}
Next, we use the method in \cite{Bidaut} to get nonexistence result in nonradial case.

%%%%%%%%%%%%%%%%%%%%%%%%%%%%%%%定理
\begin{thm}\label{exteriornonradial}
If $1<p<N$, and $p-1<q\le\frac{(N+\alpha)(p-1)}{N-p}$, then \eqref{original} has no positive solution in $G$, where $G=\{x\in \mathbb{R}^N:|x|\ge1\}$.
\end{thm}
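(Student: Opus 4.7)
The strategy is to reduce the nonradial problem to the radial nonexistence result of Lemma \ref{exteriorradial}, following the scheme of \cite{Bidaut}. Argue by contradiction: suppose $u > 0$ is a positive solution of \eqref{original} in $G$.

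The first step is to obtain a priori decay of $u$ at infinity. Via the Kelvin-type inversion $y = x/|x|^2$, the exterior problem on $G$ is transformed into an interior punctured-ball problem whose coefficients fall into the integrability classes required by Lemma \ref{isolated singularity}. This yields an upper bound of the form $u(x) \le C\mu(x)$ for $|x|$ large, and in particular shows $u(x) \to 0$ as $|x| \to \infty$.

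The second step constructs, from the nonradial $u$, a positive radial supersolution of \eqref{original} in $G$. The natural candidate is the spherical mean $\bar u(r) := \frac{1}{\omega_{N-1}} \int_{S^{N-1}} u(r\sigma)\, d\sigma$. The key claim is that
\[-\frac{1}{r^{N-1}} \bigl(r^{N-1} |\bar u'|^{p-2} \bar u'\bigr)' \ge r^{\alpha} \bar u^q \quad \text{for } r > 1,\]
in the weak sense. For the right-hand side, Jensen's inequality gives $\overline{u^q}(r) \ge \bar u(r)^q$ when $q \ge 1$; when $q \in (p-1, 1)$ one instead combines the pointwise bound from Step~1 with a Harnack-type lower bound to compare $\overline{u^q}$ to $\bar u^q$. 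For the left-hand side, one exploits convexity of $t \mapsto |t|^p$ together with divergence-theorem manipulations on spherical shells to relate $\int_{S^{N-1}} |\nabla u|^{p-2}\partial_r u \, d\sigma$ to $|\bar u'|^{p-2} \bar u'$.

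With such a positive radial supersolution in hand, a standard subsolution-supersolution iteration using Lemma \ref{comparison principle} (between $\bar u$ and a small positive radial subsolution on large annuli) produces a positive radial solution of \eqref{original} in $G$, contradicting Lemma \ref{exteriorradial} and closing the argument.

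I expect Step~2 to be the main obstacle. Averaging does not commute with the nonlinear operator $-\Delta_p$ when $p \neq 2$, so establishing that $\bar u$ is a radial supersolution requires delicate convexity arguments and sharp Jensen-type inequalities. A workable alternative, sometimes cleaner, is to replace $\bar u$ by $m(r) := \inf_{|x|=r} u(x)$ and derive the radial supersolution property via Lemma \ref{comparison principle} applied on annular neighborhoods of points where the infimum is attained.
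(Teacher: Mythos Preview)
Your overall contradiction strategy (produce a positive radial solution on an exterior domain and invoke Lemma~\ref{exteriorradial}) matches the paper, but your Step~2 is a genuine gap, and Step~1 is both unnecessary and itself shaky.

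For Step~2: as you anticipate, neither the spherical mean $\bar u$ nor the spherical infimum $m(r)$ can be shown to be a radial supersolution of $-\Delta_p w = |x|^\alpha w^q$ by convexity/Jensen arguments when $p\neq 2$; the $p$-Laplacian does not interact well with spherical averaging, and there is no clean inequality relating $\int_{S^{N-1}}|\nabla u|^{p-2}\partial_r u\,d\sigma$ to $|\bar u'|^{p-2}\bar u'$. The paper sidesteps this entirely: it never radializes $u$. Instead it sets $m=\min_{|x|=2}u$ and runs a monotone iteration on annuli $\{2<|x|<n\}$,
\[
-\Delta_p u_{n,k}=|x|^\alpha u_{n,k-1}^q,\qquad u_{n,k}\big|_{|x|=2}=m,\quad u_{n,k}\big|_{|x|=n}=0,\qquad u_{n,0}\equiv 0.
\]
Each $u_{n,k}$ is radial because the \emph{data} are radial (radial right-hand side by induction, constant boundary values), not because any barrier is radial. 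The nonradial $u$ enters only through the comparison $u_{n,k}\le u$, which follows from $-\Delta_p u_{n,k}=|x|^\alpha u_{n,k-1}^q\le |x|^\alpha u^q=-\Delta_p u$ and $u_{n,k}\le u$ on $\partial$. Passing $k\to\infty$ and then $n\to\infty$ yields a positive radial solution on $\{|x|>2\}$, contradicting Lemma~\ref{exteriorradial}.

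For Step~1: no decay estimate on $u$ at infinity is needed; boundedness of the iterates comes for free from $u_{n,k}\le u$. Moreover, a Kelvin-type inversion does not transform $-\Delta_p$ cleanly when $p\neq 2$ (there is no conformal invariance), so the appeal to Lemma~\ref{isolated singularity} after inversion would require substantial additional work.
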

\begin{proof}
We argue by contradiction, assuming that \eqref{original} has a positive solution in $G$. Denote $m=\min_{|x|=2} u(x)$, define a sequence of radial functions as follows: $u_{n,0}\equiv0$, when $k\ge1$
\begin{equation*}
\left\{\begin{array}{cc}
-\Delta_p u_{n,k}=|x|^{\alpha}u^q_{n,k-1},&2<|x|<n,\\
u_{n,k}=m,&|x|=2,\\
u_{n,k}=0,&|x|=n.
\end{array}\right.
\end{equation*}
Using Lemma \ref{comparison principle}, we can prove $u_{n,k-1}(x)\le u_{n,k}(x)\le u(x)$. In addition, by \cite{Lieberman,Tolksdorf} $u_{n,k}(x)$ is H\"older continuous. So there exists a subsequence still denoted by $u_{n,k}(x)$ such that $u_{n,k}(x)\to u_n(x)$ as $k\to \infty$ for some positive radial function $u_n(x)$ satisfying
\begin{equation*}
\left\{\begin{array}{cc}
-\Delta_p u_n=|x|^{\alpha}u^q_n,&2<|x|<n,\\
u_n=m,&|x|=2,\\
u_n=0,&|x|=n.
\end{array}\right.
\end{equation*}
$u_n$ is increasing with $n$ in each common domain, and $u_n\le u(x)$, by the diagonal method, there exists a positive radial function $v(x)$ such that $u_n(x)\to v(x)$ in $|x|\ge2$. In addition, $v(x)$ satisfies
\begin{equation*}
\left\{\begin{array}{cc}
-\Delta_p v=|x|^{\alpha}v^q,&|x|>2,\\
v=m,&|x|=2.
\end{array}\right.
\end{equation*}
This is a contradiction to Theorem \ref{exteriorradial} which implies equation \eqref{original} has no positive radial solution in $|x|\ge 2$.
\end{proof}

\subsection{Proof of Theorem \ref{nonradialrn}}
\begin{proof}
The idea comes from Lemma 1.1 in \cite{Friedman}. Let $\phi_b(\xi)=b^{\frac{p+\alpha}{q+1-p}}u(b\xi)$, where $b$ is chosen such that $|x|<b$, then $\phi_b(\xi)$ satisfies
\[div_{\xi}\left(|\nabla_{\xi}\phi_b|^{p-2}\nabla_{\xi}\phi_b\right)+|\xi|^{\alpha}\phi_b^q=0.\]
Let $\Gamma=\{1\le |\xi| \le 4\}$, $\Gamma^*=\{2\le |\xi| \le 3\}$. From  $|x|^{\frac{p+\alpha}{q+1-p}}u(x)\le C$, we get
\[||\phi_b||_{L^\infty(\Gamma)}\le b^{\frac{p+\alpha}{q+1-p}}|b\xi|^{-\frac{p+\alpha}{q+1-p}}\le C,\]
so $||\nabla\phi_b||_{C^\alpha(\Gamma^*)}\le C$, which means
\[||\nabla u||_{C^\alpha(\Gamma^*)}\le Cb^{-\frac{q+1+\alpha}{q+1-p}}\le  C|x|^{-\frac{q+1+\alpha}{q+1-p}}.\]
As we have the gradient estimate of $u$, we can use the same procedure as in the proof of Theorem \ref{globalthm} to prove $u\equiv0$.
\end{proof}
\section*{Acknowledgements}
This work was supported by the National Natural Science Foundation of China (Grant No. 12071009).


\begin{thebibliography}{10}
	\bibitem{Bidaut}
	Marie-Francoise Bidaut-Veron.
	\newblock Local and global behavior of solutions of quasilinear equations of
	{E}mden-{F}owler type.
	\newblock {\em Arch. Rational Mech. Anal.}, 107(4):293--324, 1989.
	
	\bibitem{Brezis1987}
	Ha\"im Br\'ezis and Luc Oswald.
	\newblock Singular solutions for some semilinear elliptic equations.
	\newblock {\em Arch. Rational Mech. Anal.}, 99(3):249--259, 1987.
	
	\bibitem{Brezis}
	Ha\"{\i}m Br\'{e}zis and Laurent V\'{e}ron.
	\newblock Removable singularities for some nonlinear elliptic equations.
	\newblock {\em Arch. Rational Mech. Anal.}, 75(1):1--6, 1980/81.
	
	\bibitem{Carleson}
	Lennart Carleson.
	\newblock {\em Selected problems on exceptional sets}.
	\newblock Van Nostrand Mathematical Studies, No. 13. D. Van Nostrand Co., Inc.,
	Princeton, N.J.-Toronto, Ont.-London, 1967.
	
	\bibitem{Chandrasekhar}
	Subrahmanyan Chandrasekhar.
	\newblock {\em An introduction to the study of stellar structure}.
	\newblock Dover Publications, Inc., New York, N.Y., 1957.
	
	\bibitem{Friedman}
	Avner Friedman and Laurent V\'{e}ron.
	\newblock Singular solutions of some quasilinear elliptic equations.
	\newblock {\em Arch. Rational Mech. Anal.}, 96(4):359--387, 1986.
	
	\bibitem{Laurent}
	Mohammed Guedda and Laurent V\'{e}ron.
	\newblock Local and global properties of solutions of quasilinear elliptic
	equations.
	\newblock {\em J. Differential Equations}, 76(1):159--189, 1988.
	
	\bibitem{Henon}
	Michel H\'{e}non.
	\newblock Numerical experiments on the stability of spherical stellar systems.
	\newblock {\em Astronomy and Astrophysics}, 24:229--238, 1973.
	
	\bibitem{Lieberman}
	Gary Lieberman.
	\newblock The natural generalization of the natural conditions of
	{L}adyzhenskaya and {U}ral'tseva for elliptic equations.
	\newblock {\em Comm. Partial Differential Equations}, 16(2-3):311--361, 1991.
	
	\bibitem{Lindqvist}
	Peter Lindqvist.
	\newblock {\em Notes on the stationary {$p$}-{L}aplace equation}.
	\newblock SpringerBriefs in Mathematics. Springer, Cham, 2019.
	
	\bibitem{Lions}
	Pierre-Louis Lions.
	\newblock Isolated singularities in semilinear problems.
	\newblock {\em J. Differential Equations}, 38(3):441--450, 1980.
	
	\bibitem{Loewner}
	Charles Loewner and Louis Nirenberg.
	\newblock Partial differential equations invariant under conformal or
	projective transformations.
	\newblock In {\em Contributions to analysis (a collection of papers dedicated
		to {L}ipman {B}ers)}, pages 245--272. 1974.
	
	\bibitem{Kenichi}
	Kenichi Nagasaki.
	\newblock Radial solutions for {$\Delta u+|x|^l|u|^{p-1}u=0$} on the unit ball
	in {${\bf R}^n$}.
	\newblock {\em J. Fac. Sci. Univ. Tokyo Sect. IA Math.}, 36(2):211--232, 1989.
	
	\bibitem{Ni}
	Wei~Ming Ni.
	\newblock A nonlinear {D}irichlet problem on the unit ball and its
	applications.
	\newblock {\em Indiana Univ. Math. J.}, 31(6):801--807, 1982.
	
	\bibitem{Serra}
	Enrico Serra.
	\newblock Non radial positive solutions for the {H}\'{e}non equation with
	critical growth.
	\newblock {\em Calc. Var. Partial Differential Equations}, 23(3):301--326,
	2005.
	
	\bibitem{Serrin1}
	James Serrin.
	\newblock Local behavior of solutions of quasi-linear equations.
	\newblock {\em Acta Math.}, 111:247--302, 1964.
	
	\bibitem{Serrin}
	James Serrin.
	\newblock Isolated singularities of solutions of quasi-linear equations.
	\newblock {\em Acta Math.}, 113:219--240, 1965.
	
	\bibitem{Smets}
	Didier Smets, Michel Willem, and Jiabao Su.
	\newblock Non-radial ground states for the {H}\'{e}non equation.
	\newblock {\em Commun. Contemp. Math.}, 4(3):467--480, 2002.
	
	\bibitem{Tolksdorf}
	Peter Tolksdorf.
	\newblock Regularity for a more general class of quasilinear elliptic
	equations.
	\newblock {\em J. Differential Equations}, 51(1):126--150, 1984.
	
	\bibitem{Vazquez1980}
	Juan~Luis V\'{a}zquez and Laurent V\'{e}ron.
	\newblock Removable singularities of some strongly nonlinear elliptic
	equations.
	\newblock {\em Manuscripta Math.}, 33(2):129--144, 1980/81.
	
	\bibitem{Veron1981}
	Laurent V\'{e}ron.
	\newblock Singular solutions of some nonlinear elliptic equations.
	\newblock {\em Nonlinear Anal.}, 5(3):225--242, 1981.
	
	\bibitem{Veron}
	Laurent V\'eron.
	\newblock {\em Local and global aspects of quasilinear degenerate elliptic
		equations}.
	\newblock World Scientific Publishing Co. Pte. Ltd., Hackensack, NJ, 2017.
	\newblock Quasilinear elliptic singular problems.
	
\end{thebibliography}
\end{document}